\newcommand{\p}{^}
\newcommand{\pb}[2]{
	\ensuremath{\langle #1,#2 \rangle}}
\renewcommand{\d}[1]{{\rm d}#1}
\newcommand{\wt}{\widetilde}
	\newcommand{\rmd}{\mathrm{d}}
\newcommand{\rme}{\mathrm{e}}
\newcommand{\R}{\mathrm{R}}
\newcommand{\Lrm}{\mathrm{L}}
\newcommand{\Ascr}{\mathscr{A}}
\newcommand{\Bscr}{\mathscr{B}}
\newcommand{\Cscr}{\mathscr{C}}
\newcommand{\Escr}{\mathscr{E}}
\newcommand{\Fscr}{\mathscr{F}}
\newcommand{\Gscr}{\mathscr{G}}
\newcommand{\Hscr}{\mathscr{H}}
\newcommand{\Kscr}{\mathscr{K}}
\newcommand{\Nscr}{\mathscr{N}}
\newcommand{\Pscr}{\mathscr{P}}
\newcommand{\Rscr}{\mathscr{R}}
\newcommand{\Sscr}{\mathscr{S}}
\newcommand{\zt}{\zeta}
\newcommand{\al}{\alpha}
\newcommand{\ep}{\varepsilon}
\newcommand{\lm}{\lambda}
\newcommand{\Lm}{\Lambda}
\newcommand{\sig}{\sigma}
\newcommand{\om}{\omega}
\newcommand{\Om}{\Omega}
\newcommand{\cadlag}{c\`adl\`ag}
\newcommand{\Abb}{\mathbb{A}}
\newcommand{\Bb}{\mathbb{B}}
\newcommand{\Ebb}{\mathbb{E}}
\newcommand{\Fbb}{\mathbb{F}}
\newcommand{\Pbb}{\mathbb{P}}
\newcommand{\Qbb}{\mathbb{Q}}
\newcommand{\Rbb}{\mathbb{R}}
\newcommand{\Hbb}{\mathbb{H}}
\newcommand{\Gbb}{\mathbb{G}}
\newcommand{\bi}{\begin{itemize}}
\newcommand{\ei}{\end{itemize}}
\newcommand{\be}{\begin{enumerate}}
\newcommand{\ee}{\end{enumerate}}
\newcommand{\beq}{\begin{equation}}
\newcommand{\eeq}{\end{equation}}
\newcommand{\beqs}{\begin{equation*}}
\newcommand{\eeqs}{\end{equation*}}
\newcommand{\beqa}{\begin{eqnarray}}
\newcommand{\eeqa}{\end{eqnarray}}
\newcommand{\beqas}{\begin{eqnarray*}}
\newcommand{\eeqas}{\end{eqnarray*}}
\newcommand{\aPP}[2]{\ensuremath{\langle #1,#2 \rangle}}
\newtheorem{theorem}{Theorem}[section]
\newtheorem{lemma}[theorem]{Lemma}
\newtheorem{proposition}[theorem]{Proposition}
\newtheorem{corollary}[theorem]{Corollary}
\theoremstyle{definition}
\newtheorem{definition}[theorem]{Definition}
\newtheorem{assumption}[theorem]{Assumptions}
\newtheorem{remark}[theorem]{Remark}
\numberwithin{equation}{section}
\def\timenow{\@tempcnta\time
\@tempcntb\@tempcnta
\divide\@tempcntb60
\ifnum10>\@tempcntb0\fi\number\@tempcntb
:\multiply\@tempcntb60
\advance\@tempcnta-\@tempcntb
\ifnum10>\@tempcnta0\fi\number\@tempcnta}
\title{On~The~Weak~Representation~Property~in Progressively~Enlarged~Filtrations\\ with an Application in\\ Exponential~Utility~Maximization}
\author{Paolo Di Tella$^1$}
\date{}
\newcommand{\pa}{\color{black}}
\begin{document}
\maketitle

\begin{abstract} In this paper we show that the weak representation property of a semimartingale $X$ with respect to a filtration $\mathbb{F}$ is preserved in the progressive enlargement $\mathbb{G}$ by a random time $\tau$ avoiding $\mathbb{F}$-stopping times and such that $\Fbb$ is immersed in $\mathbb{G}$. As an application of this, we can solve an exponential utility maximization problem in the enlarged filtration $\mathbb{G}$ following the dynamical approach, based on suitable BSDEs, both over the fixed-time horizon $[0,T]$, $T>0$, and over the random-time horizon $[0,T\wedge\tau]$. 
 \end{abstract}
\vspace{1em} 

{\noindent
\footnotetext[1]{ \emph{Adress:} Inst.\ f\"ur Mathematische Stochastik, TU Dresden. Zellescher Weg 12-14 01069 Dresden, Germany. 

\emph{E-Mail: }{\tt Paolo.Di\_Tella{\rm@}tu-dresden.de}}}

{\noindent \textit{Keywords:}  Weak representation property, semimartingales, progressive enlargement of filtrations, exponential utility maximization.
}
\section{Introduction}\label{sec:intro}
In this paper we consider an $\Rbb\p d$-valued semimartingale $X$ that possesses the weak representation property (from now on WRP) with respect to a filtration $\Fbb$. For the definition of the WRP see Definition \ref{eq:wprp} below. We denote by $\Gbb$ the progressive enlargement of $\Fbb$ by a random time $\tau$ and assume that $\tau$ avoids $\Fbb$-stopping times and that $\Fbb$ is immersed in $\Gbb$ (Assumptions \ref{ass:av.im} below). We then show that under these assumptions on $\tau$ the WRP of $X$ with respect to $\Fbb$ propagates to $\Gbb$. More precisely, if $H$ denotes the default process associated with $\tau$, we prove that the $\Rbb\p{d+1}$-valued semimartingale $(X,H)$ possesses the WRP with respect to $\Gbb$. The WRP which we obtain with respect to $\Gbb$ is valid for all $\Gbb$-martingales and not only for $\Gbb$-martingales stopped at $\tau$.

Let $X$ be an $\Fbb$-local martingale and let $\Gbb$ be the progressive enlargement of $\Fbb$ by a random time $\tau$. The propagation of the predictable representation property (from now on PRP) of $X$ to $\Gbb$ has been extensively studied in the literature under different assumptions on $\tau$ and a pioneering work about this topic is Kusuoka \cite{Ku99} in a Brownian setting.  In \cite{AJR18}, Aksamit et al.\ proved a PRP for a class of local martingales stopped at $\tau$ if $\Gbb$ is the progressive enlargement of the filtration generated by a Poisson process. In Jeanblanc \& Song \cite{JS15} the propagation of the PRP to $\Gbb$ is studied in a more general case. We also recall Coculescu, Jeanblanc \& Nikeghbali \cite{CJN12} for results about the propagation of the PRP.

The result on the WRP with respect to $\Gbb$ in this paper, generalises, in particular, the martingale representation theorem obtained in \cite{Ku99}. It also extends the WRP obtained by Kunita \& Watanabe in \cite{KW67} (see also \cite{K04}) for L\'evy processes to progressively enlarged L\'evy filtrations. Furthermore, it adds a class of non trivial examples to Becherer \cite{Be06}.

We remark that, according to \cite[Theorem 13.14]{HWY92}, if $X$ possesses the PRP then it also possess the WRP (for this reason the PRP is sometimes also called strong representation property). However, the converse is, in general, not true and it is easy to find martingales possessing the WRP but not the PRP.  This means that the study of the propagation of the WRP to the progressive enlargement $\Gbb$ is of its own interest. 

{\pa As an application of the WRP with respect to the progressive enlargement} $\Gbb$ of $\Fbb$, we can solve a problem of expected exponential utility maximization of the wealth of an investor with respect to $\Gbb$, following the dynamical approach, based on BSDEs. Indeed, because of the WRP with respect to $\Gbb$ obtained in this paper, {\pa this problem} fits in the frame of Becherer \cite{Be06} and we can apply \cite[Theorem 3.5]{Be06} to ensure the existence and the uniqueness of the solution of the involved BSDEs. 

In the financial context of the exponential utility maximization, the filtration $\Fbb$ models the information available in the market. On the other side, the random time $\tau$, which is not an $\Fbb$-stopping time, models the occurrence time of an external event, as the default of (part of) the market or the death of the investor himself, which cannot be inferred on the basis of the information available in $\Fbb$.
We consider the  exponential utility maximization problem with respect to $\Gbb$ both at maturity $T>0$ and, inspired by Jeanblanc et al.\ \cite{JMPR15}, at $T\wedge\tau$. The problem at $T$ describes the situation in which the investor (or his heirs) is able to go on with the investment up to the maturity $T$, although the exogenous event has occurred before $T$. Contrarily, the optimization problem at $T\wedge\tau$ corresponds to the case in which the investment can only be pursued up to the occurrence time $\tau$ of the market-exogenous shock event, that is, the investor has access to the market only up to $\tau$. This latter situation is common, e.g., in life insurance, where the time to maturity $T$ can exceed with probability one the life duration of the investor (in this case $\tau$ models the death of the investor). 

A few references about utility maximization in progressively enlarged filtrations are, e.g.,  Bielecki et al.\ \cite{BJ08}, \cite{BJR04}, Jeanblanc et al.\ \cite{JMPR15}, Lim \& Quenez \cite{LQ11}, for the dynamical approach and Blanchet-Scalliet et al.\ \cite{BSEKJM08} and Bouchard \& Pham \cite{BoPh04} for the duality approach. 

Concerning martingale representation theorems and utility maximization in progressively enlarged filtration, we also mention some work-in-progress as Choulli et al. \cite{CDV18} and \cite{CY18}.

This paper has the following structure: In Section \ref{sec:bas}, we introduce some preliminary notions. In Section \ref{sec:WRP}, we recall properties of integer-valued random measures and discuss the WRP for a general semimartingale. Section \ref{sec:prog.en} is a short summary of results about progressive enlargement of filtrations which will be useful later. In Section \ref{sec:wea.prp.G}, we show that the WRP of a semimartingale $X$ propagates from $\Fbb$ to its progressive enlargement $\Gbb$. Finally, in Section \ref{sec:app.ex.ut.max}, as a consequence of the WRP with respect to $\Gbb$ obtained in Section \ref{sec:wea.prp.G}  and the theory developed in \cite{Be06}, we solve an exponential utility maximization problem both over $[0,T]$ and over the random-time horizon $[0,T\wedge\tau]$. 

\section{Basic Notions}\label{sec:bas}
 In this paper we regard an element of $\Rbb\p d$, $d\geq1$, as a column, that is,  $v=(v\p1,\ldots,v\p d)\p{tr}$, $v\in\Rbb\p d$, where $tr$ denotes the transposition operation.

Let $(\Om,\Fscr,\Pbb)$ be a complete probability space and let $\Fbb=(\Fscr_t)_{t\geq0}$ be a filtration satisfying the usual conditions, i.e., $\Fbb$ is right-continuous and contains the $\Pbb$-null sets of $\Fscr$. For a filtration $\Fbb$ we define $\Fscr_\infty$ as the smallest $\sig$-algebra containing each $\Fscr_t$, $t\geq0$, that is $\Fscr_\infty:=\bigvee_{t\geq0}\Fscr_t$.

For a \cadlag\  process $X$, we define $X_{0-}:=X_0$ so that the jump process $\Delta X$ is equal to zero in $t=0$.

If $X$ is a \cadlag\ $\Fbb$-adapted process with paths of finite variation and $K$ is a nonnegative measurable process, we denote by $K\cdot X=(K\cdot X_t)_{t\geq0}$ the (Stieltjes--Lebesgue) integral process of $K$ with respect to $X$, that is, $K\cdot X_t(\om):=\int_0\p t K_s(\om)\rmd X_s(\om)$.

As in \cite[I.3.6]{JS00}, we denote by $\Ascr\p+=\Ascr\p+(\Fbb)$ the set of \emph{integrable} increasing processes, that is, if $X\in\Ascr\p+$, then $X$ is $\Fbb$-adapted, has \cadlag\ and increasing paths, $X_0=0$ and $\Ebb[X_\infty]<+\infty$. By $\Ascr\p+_\mathrm{loc}=\Ascr\p+_\mathrm{loc}(\Fbb)$ we denote the space of $\Fbb$-adapted and \emph{locally integrable} increasing processes.
 
For $p\in[1,+\infty)$, we denote by $\Hscr\p p=\Hscr\p p(\Fbb)$ the space of $\Fbb$-adapted real-valued uniformly integrable martingales such that $\|X\|_{\Hscr\p p}:=\Ebb[\sup_{t\geq0}|X_t|\p p]\p{1/p}<+\infty$. By Doob's inequality, $\|X\|_{\Hscr\p p}$ is equivalent to $\|X\|_{p}:=\Ebb[|X_\infty|\p p]\p{1/p}$ for $p>1$ and, for every $p\geq1$, $(\Hscr\p p,\|\cdot\|_{\Hscr\p p})$ is a Banach space while $(\Hscr\p 2,\|\cdot\|_{2})$ is a Hilbert space. The space $\Hscr\p p_\mathrm{loc}$, $p\geq1$, is introduced from $\Hscr\p p$ by localization. We recall that $\Hscr\p1_\mathrm{loc}$ coincides with the space of $\Rbb$-valued local martingales (see \cite[Proposition 2.38]{J79}).

For $X\in\Hscr\p2_\mathrm{loc}$, we denote by $\aPP{X}{X}$ the $\Fbb$-predictable covariation of $X$, that is, $\aPP{X}{X}$ is the unique $\Fbb$-predictable process belonging to  $\Ascr\p+_\mathrm{loc}$ such that $X\p2-\aPP{X}{X}\in\Hscr\p1_\mathrm{loc}$.  If $X\in\Hscr\p2$, then $\aPP{X}{X}\in\Ascr\p+$ and $X\p2-\aPP{X}{X}\in\Hscr\p1$.

For any semimartingale $X$ we denote by $[X,X]$ the quadratic variation of $X$:
\[
[X,X]_t:=\aPP{X\p c}{X\p c}_t+\sum_{s\leq t}(\Delta X_s)\p2,\quad t\geq0,
\]
where $X\p c$ denotes the continuous local martingale part of $X$.

We stress that, if $X\in\Hscr\p2$, then
 $[X,X]\in\Ascr\p+$ and $[X,X]-\aPP{X}{X}\in\Hscr\p1$, i.e., $\aPP{X}{X}$ is the $\Fbb$-dual predictable projection of $[X,X]$. If $X$ is a continuous local martingale, then the identity $[X,X]=\aPP{X}{X}$ holds.

We are now going to recall the stochastic integral for a \emph{multidimensional} martingale. For this part we refer to \cite[Chapter VI, Section 4\S a]{J79}. Let $X=(X\p1,\ldots,X\p d)\p{tr}$ be an $\Rbb\p d$-valued process such that $X\p i\in\Hscr\p p_\mathrm{loc}$, $p\geq1$, $i=1,\ldots,d$, that is, $X$ is an $\Rbb\p d$-valued local martingale. We denote by $a$ and $A$ the processes given in \cite[Chapter VI, Section 4\S a]{J79} such that $[X,X]=a\cdot A$ (see \cite[Eq.\ (4.56)]{J79}). Let $K$ be an $\Rbb\p d$-valued predictable process and define $\|K\|_{\Lrm\p p(X)}=\Ebb[((K\p{tr} aK)\cdot A)\p{p/2}_\infty]$. We then define 
$
\Lrm\p p(X):=\{K \textnormal{ predictable and }\ \Rbb\p d\textnormal{-valued}:\|K\|_{\Lrm\p p(X)}<+\infty\}$. Notice that the identity $
\Lrm\p p(X)=\{K \textnormal{ predictable and }\ \Rbb\p d\textnormal{-valued}:((K\p{tr} aK)\cdot A)\p{p/2}\in\Ascr\p+\}$ obviously holds{\pa. The space $\Lrm\p p_\mathrm{loc}(X)$ is introduced making use of $\Ascr\p+_\mathrm{loc}$ instead of $\Ascr\p+$.}
 We recall that, if $X\p i\in\Hscr\p p_\mathrm{loc}$, $i=1,\ldots,d$, then $(\Lrm\p p(X),\|\cdot\|_{\Lrm\p p(X)})$ is a complete space (see \cite[Theorem 4.60]{J79}). For $K\in\Lrm\p 1_\mathrm{loc}(X)$ we denote by $K\cdot X$  the \emph{multidimensional} stochastic integral of $K$ with respect to $X$. We notice that $K\cdot X\in\Hscr\p 1_\mathrm{loc}$, the identity $[K\cdot X,K\cdot X]=K\p{tr}aK\cdot A$ holds (see \cite[Remark 4.61]{J79}) and $K\cdot X\in\Hscr\p p$ if and only if $K\in\Lrm\p p(X)$, this latter claim being a direct consequence of \cite[Eq.\ (4.57)]{J79} and of Burkh\"older--Davis--Gundy's inequality (from now on BDG's inequality), since $\Ebb[[K\cdot X,K\cdot X]_\infty\p{p/2}]=\|K\|_{\Lrm\p p(X)}$. To specify the filtration, we sometimes write $\Lrm\p p(X,\Fbb)$. Notice that, if $X\in\Hscr\p p_\mathrm{loc}$ is real-valued, then $\Lrm\p p(X)$ is the space of $\Fbb$-predictable processes $K$ such that $(K\p2\cdot [X,X])\p{p/2}\in\Ascr\p+$. Furthermore, in the special case in which $X$ is a local martingale of finite variation and $K\in\Lrm\p1_\mathrm{loc}(X)$ is such that the Stieltjes--Lebesgue integral of $K$ with respect to $X$ can be defined (as, e.g., if $K$ is locally bounded), then the stochastic integral and the Stieltjes--Lebesgue integral of $K$ with respect to $X$ coincide (see \cite[Remark 2.47]{J79}). Hence, the notation $K\cdot X$ is not ambiguous.

We shall sometimes consider a finite time horizon $T>0$ and stochastic processes restricted to the finite time interval $[0,T]$. In this case, we write $\Fbb_T=(\Fscr_t)_{t\in[0,T]}$ to denote the filtration $\Fbb$ restricted to the finite time interval $[0,T]$. We furthermore denote by $\Hscr\p p_T$ the space of $p$-integrable $\Fbb$-martingales on $[0,T]$. We write $\Hscr\p p_{\mathrm{loc}, T}$ for the localized version of $\Hscr\p p_T$. Analogously, we write $\Lrm\p p_T(X)$ for the space of the predictable integrands restricted to the finite time interval $[0,T]$ for a $d$-valued local martingale $X=(X\p1,\ldots,X\p d)\p{tr}$, $X\p i\in\Hscr\p p_{\mathrm{loc}, T}$, $i=1,\ldots,d$.

\section{Random Measures and Weak Representation Property}\label{sec:WRP}
Let $\mu$ be a nonnegative random measure on $\Rbb_+\times E$ in the sense of \cite[Definition II.1.3]{JS00}, where $E$ coincides with $\Rbb\p d$ or with a Borel subset of $\Rbb\p d$. Notice that we assume $\mu(\om,\{0\}\times E)=0$ identically.

Let $W$ be a $\Pscr(\Fbb)\otimes\Bscr(E)$-measurable mapping on the set $\Om\times\Rbb_+\times E$, where we denote by $\Pscr(\Fbb)$ the $\sig$-algebra generated by the $\Fbb$-predictable sets of $\Om\times\Rbb_+$ {\pa and by $\Bscr(E)$ the Borel $\sig$-algebra on $E$}. We say that $W$ is an $\Fbb$-predictable mapping.

Let $W$ be an $\Fbb$-predictable mapping. For $t\geq0$, {\pa following \cite[Chapter II]{JS00},} we define
\[
 W\ast\mu(\om)_t:=\begin{cases} \displaystyle\int_{[0,t]\times E}W(\om,t,x)\mu(\om,\rmd t,\rmd x),&\quad \textnormal{if } \displaystyle\int_{[0,t]\times E}|W(\om,t,x)|\mu(\om,\rmd t,\rmd x)<+\infty;\\\\
\displaystyle+\infty,&\quad\textnormal{else}.
\end{cases}
\]
We say that $\mu$ is an $\Fbb$-predictable random measure if $W\ast\mu$ is $\Fbb$-predictable for every $\Fbb$-predictable mapping $W$.

Let $X$ be an $\Rbb\p d$-valued $\Fbb$-semimartingale. We denote by $\mu\p X$ the jump measure of $X$, that is,
\[
\mu\p{X}(\om,\rmd t,\rmd x)=\sum_{s>0}1_{\{\Delta X_s(\om)\neq0\}}\delta_{(s,\Delta X_s(\om))}(\rmd t,\rmd x),
\]
where, here and in the whole paper, $\delta_a$ denotes the Dirac measure at point $a$ (which can be $d$-dimensional, $d\geq1$). 

From \cite[Theorem II.1.16]{JS00}, $\mu\p X$ is an \emph{integer-valued random measure} with respect to $\Fbb$ (see \cite[Definition II.1.13]{JS00}). By $(B\p X, C\p X,\nu\p X)$ we denote the $\Fbb$-predictable characteristics of $X$ with respect to the truncation function $h(x)=1_{\{|x|\leq1\}}x$ (see \cite[Definition II.2.3]{JS00}). We recall that $\nu\p X$ is a predictable random measure characterized by the following two properties: For any $\Fbb$-predictable mapping $W$ such that $|W|\ast\mu\p X\in\Ascr\p +_\mathrm{loc}$, we have $|W|\ast\nu\p X\in\Ascr\p +_\mathrm{loc}$ and $(W\ast\mu\p X-W\ast\nu\p X)\in\Hscr\p1_\mathrm{loc}$ {\pa(see \cite[Theorem II.1.8]{JS00})}.

We are now going to introduce the stochastic integral with respect to $(\mu\p X-\nu\p X)$  of an  $\Fbb$-predictable mapping  $W$.

Let $W$ be an $\Fbb$-predictable mapping. We define the process $\wt W$ by
\begin{equation}\label{eq:def.wr}
\wt W_t(\om):=W(\om,t,\Delta X_t(\om))1_{\{\Delta X_t(\om)\neq0\}}-\widehat W_t(\om),
\end{equation}
where, for $t\geq0$,
\[
\widehat W_t(\om):=\begin{cases} \displaystyle\int_{\Rbb\p d}W(\om,t,x)\nu\p X(\om,\{t\}\times\rmd x),&\quad \textnormal{if } \displaystyle\int_{\Rbb\p d}|W(\om,t,x)|\nu\p X(\om,\{t\}\times\rmd x)<+\infty;\\\\
\displaystyle+\infty,&\quad\textnormal{else}.
\end{cases}
\]
Notice that, according to \cite[Lemma II.1.25]{JS00}, $\widehat W$ is predictable and a version of the predictable projection of the process $(\om,t)\mapsto W(\om,t,\Delta X_t(\om))1_{\{\Delta X_t(\om)\neq0\}}$. Furthermore, since from \cite[Corollary II.1.19]{JS00} we have $\nu\p X(\om,\{t\}\times\Rbb\p d)=0$ if and only if $X$ is quasi-left continuous, we deduce that for any quasi-left continuous semimartingale $X$ and for any $\Fbb$-predictable $W$, the identity $\widehat W\equiv0$ holds.  

We introduce (see \cite[(3.62)]{J79})
\[
\textstyle\Gscr\p p(\mu\p X):=\big\{W:\ W\textnormal{ is an } \Fbb\textnormal{-predictable mapping and }\ \big(\sum_{0\leq s\leq \cdot}\wt W\p2_s\big)\p{p/2}\in \Ascr\p+\big\}.
\]
The definition of $\Gscr\p p_\mathrm{loc}(\mu\p X)$ is similar and makes use of $\Ascr\p+_\mathrm{loc}$ instead. If a finite random time horizon $T>0$ is fixed, we denote by $\Gscr\p p_T(\mu\p X)$ (resp., by $\Gscr\p p_{\mathrm{loc},T}(\mu\p X)$) the restriction of {\pa $\Gscr\p p(\mu\p X)$} (resp., of $\Gscr\p p_\mathrm{loc}(\mu\p X)$) to the finite time interval $[0,T]$. To specify the filtration we sometimes write $\Gscr\p p(\mu\p X,\Fbb)$. 
Setting
\[
\textstyle \|W\|_{\Gscr\p p(\mu\p X)}:=\Ebb\Big[\big(\sum_{s\geq 0}\wt W\p2_s\big)\p{p/2}\Big]\p{1/p},
\] 
we get a semi-norm on $\Gscr\p p(\mu\p X)$. 

Let now $W\in\Gscr\p1_\mathrm{loc}(\mu\p X)$. The stochastic integral of $W$ with respect to $(\mu\p X-\nu\p X)$ is denoted by $W\ast(\mu\p X-\nu\p X)$ and is defined as the unique purely discontinuous local martingale $Z\in\Hscr\p1_\mathrm{loc}$ such that $Z_0=0$ and $\Delta Z=\wt W$. To justify this definition, see \cite[Definition II.1.27]{JS00} and the subsequent comment. We only recall that, according to \cite[Proposition 3.66]{J79}, $W\ast(\mu\p X-\nu\p X)\in\Hscr\p p$ if and only if $W\in\Gscr\p p(\mu\p X)$.

We are now ready to give the definition of the WRP with respect to the filtration $\Fbb$ for an $\Fbb$-semimartingale $X$. 

\begin{definition}\label{def:w.prp}
For a fixed $p\geq1$, we say that the $\Rbb\p d$-valued $\Fbb$-semimartingale $X$ with continuous local martingale part $X\p c$ and characteristics $(B\p X,C\p X,\nu\p X)$ possesses the $\Hscr\p p$-WRP with respect to $\Fbb$ if every $N\in\Hscr\p p(\Fbb)$ can be represented as
\begin{equation}\label{eq:wprp}
N=N_0+K\cdot X\p c+W\ast(\mu\p X-\nu\p X),\quad K\in\Lrm\p p(X\p c,\Fbb),\quad W\in\Gscr\p p(\mu\p X,\Fbb).
\end{equation}
\end{definition}
We remark that Definition \ref{def:w.prp} is similar to \cite[Definition III.4.22]{JS00}. 

At a first look it could seem that the $\Hscr\p p$-WRP gets stronger as $p$ increases. The next proposition shows that all $\Hscr\p p$-WRP are in fact equivalent.

\begin{proposition}\label{prop:eq.wprp}
If the $\Rbb\p d$-valued semimartingale $X$ possesses the $\Hscr\p 1$-\textnormal{WRP} with respect to $\Fbb$, then it possesses the $\Hscr\p p$-\textnormal{WRP}, for every $p\geq1$. Conversely, if $X$ possesses the $\Hscr\p p$-\textnormal{WRP} with respect to $\Fbb$ for a fixed $p>1$, then it possess the $\Hscr\p1$-\textnormal{WRP} and, hence, the $\Hscr\p q$-\textnormal{WRP} for every $q\geq1$.
\end{proposition}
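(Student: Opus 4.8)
The plan is to prove the two implications separately. The forward implication ($\Hscr^1\Rightarrow\Hscr^p$) is essentially a pointwise upgrade of the integrands, while the converse ($\Hscr^p\Rightarrow\Hscr^1$ for a fixed $p>1$) rests on approximating $\Hscr^1$-martingales by $\Hscr^p$-ones and passing to the limit.

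For the forward direction, fix $p\geq1$ and $N\in\Hscr^p(\Fbb)$. On a probability space $\Hscr^p\subseteq\Hscr^1$, so the $\Hscr^1$-WRP gives $N=N_0+K\cdot X^c+W\ast(\mu^X-\nu^X)$ with $K\in\Lrm^1(X^c)$ and $W\in\Gscr^1(\mu^X)$. Since $K\cdot X^c$ is continuous while $W\ast(\mu^X-\nu^X)$ is purely discontinuous, uniqueness of the continuous/purely-discontinuous decomposition of $N-N_0$ forces $K\cdot X^c=N^c$ and $W\ast(\mu^X-\nu^X)=N^d$. From $[N,N]=[N^c,N^c]+[N^d,N^d]$ we get $[N^c,N^c]_\infty\vee[N^d,N^d]_\infty\leq[N,N]_\infty$, so BDG's inequality yields $N^c,N^d\in\Hscr^p$. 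The equivalences recalled above ($K\cdot X^c\in\Hscr^p\Leftrightarrow K\in\Lrm^p(X^c)$ and $W\ast(\mu^X-\nu^X)\in\Hscr^p\Leftrightarrow W\in\Gscr^p(\mu^X)$) then promote the integrands to $\Lrm^p(X^c)$ and $\Gscr^p(\mu^X)$, which is exactly the $\Hscr^p$-WRP.

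For the converse, assume the $\Hscr^p$-WRP with $p>1$ fixed and take $N\in\Hscr^1$; subtracting $N_0$ I may assume $N_0=0$. I first claim that $\Hscr^p$ is dense in $\Hscr^1$. The continuous part $N^c$ is handled by stopping at $\sigma_k=\inf\{t:|N^c_t|\geq k\}$, which gives bounded (hence $\Hscr^p$) martingales converging to $N^c$ in $\Hscr^1$. The purely discontinuous part $N^d$ is the delicate point, since its jumps need not be $p$-integrable: writing $N^d=x\ast(\mu^{N^d}-\nu^{N^d})$ as the compensated integral of the coordinate function, I would truncate $x\mapsto x1_{\{|x|\leq n\}}$, obtaining martingales with jumps bounded by $2n$ that converge to $N^d$ in $\Hscr^1$ (dominated convergence in the $\Gscr^1$-seminorm), and then stop at $\rho_m=\inf\{t:[M,M]_t\geq m\}$ so that the quadratic variation is $\leq m+4n^2$; bounded jumps together with bounded quadratic variation give, through BDG, membership in every $\Hscr^p$. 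A diagonal sequence then approximates $N^d$, and hence $N$, in $\Hscr^1$ by elements of $\Hscr^p$.

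Finally, let $N^{(k)}\in\Hscr^p$ with $N^{(k)}_0=0$ and $N^{(k)}\to N$ in $\Hscr^1$. The $\Hscr^p$-WRP gives $N^{(k)}=K^{(k)}\cdot X^c+W^{(k)}\ast(\mu^X-\nu^X)$, and since these integrals lie in $\Hscr^p\subseteq\Hscr^1$ we have $K^{(k)}\in\Lrm^1(X^c)$ and $W^{(k)}\in\Gscr^1(\mu^X)$. Orthogonality of the continuous and purely discontinuous parts together with BDG shows that $(K^{(k)}\cdot X^c)_k$ and $(W^{(k)}\ast(\mu^X-\nu^X))_k$ are separately Cauchy in $\Hscr^1$; the norm equivalences $\|K\cdot X^c\|_{\Hscr^1}\asymp\|K\|_{\Lrm^1(X^c)}$ and $\|W\ast(\mu^X-\nu^X)\|_{\Hscr^1}\asymp\|W\|_{\Gscr^1(\mu^X)}$ then make $(K^{(k)})_k$ and $(W^{(k)})_k$ Cauchy in $\Lrm^1(X^c)$ and $\Gscr^1(\mu^X)$. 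Using completeness of these spaces, their limits $K,W$ satisfy $N=K\cdot X^c+W\ast(\mu^X-\nu^X)$, i.e.\ the $\Hscr^1$-WRP; the chain to every $\Hscr^q$ then follows from the forward direction. I expect the density step — taming the large jumps of a purely discontinuous $\Hscr^1$-martingale so as to land inside $\Hscr^p$ — to be the main obstacle, with the completeness of $\Gscr^1(\mu^X)$ and the above norm equivalences as the secondary points requiring care.
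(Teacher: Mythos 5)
Your proof has the same two-step architecture as the paper's. In the forward direction you, like the paper, bound the quadratic variation of each orthogonal piece of $N-N_0$ by $[N,N]_\infty$ and use the equivalences $K\cdot X\p c\in\Hscr\p p\Leftrightarrow K\in\Lrm\p p(X\p c)$ and $W\ast(\mu\p X-\nu\p X)\in\Hscr\p p\Leftrightarrow W\in\Gscr\p p(\mu\p X)$ to upgrade the integrands; in the converse you approximate $N\in\Hscr\p1$ by $\Hscr\p p$-martingales, extract Cauchy sequences of integrands via BDG and the orthogonality of the continuous and purely discontinuous parts, and close the argument with the completeness of $\Lrm\p1(X\p c)$ and the closedness in $\Hscr\p1$ of the space of compensated integrals --- exactly the paper's route. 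The single genuine divergence is the density of $\Hscr\p p$ in $(\Hscr\p1,\|\cdot\|_{\Hscr\p1})$: the paper simply invokes \cite[Proposition 2.39]{J79}, whereas you reconstruct it by stopping and truncation. Your reconstruction is the right idea for the continuous part, but for the purely discontinuous part it is not yet airtight at exactly the place you flagged: after truncating to $x1_{\{|x|\leq n\}}$ you appeal to dominated convergence in the $\Gscr\p1(\mu\p{N\p d})$-seminorm, yet the compensator terms $\widehat{(x1_{\{|x|>n\}})}_s$ require a summable dominating bound, and the natural candidate $\sum_s\int |x|\p2\,\nu\p{N\p d}(\{s\}\times\rmd x)$ need not be finite, since $N\p d\in\Hscr\p1$ only gives $[N\p d,N\p d]_\infty<+\infty$ a.s.\ and not local integrability of $[N\p d,N\p d]$ (a compensated single jump $\xi1_{[1,+\infty)}$ with $\xi\in L\p1\setminus L\p2$ already shows the difficulty). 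Citing the density result directly, as the paper does, removes this obstacle and makes the two proofs essentially identical; if you insist on a self-contained argument, the truncation of the jumps should be organised so that the compensator of the discarded part is controlled in variation rather than termwise.
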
 
\begin{proof}
Let $X$ possess the $\Hscr\p 1$-WRP with respect to $\Fbb$ and let $p>1$ be fixed. Then, every $N\in\Hscr\p p$ can be represented as in \eqref{eq:wprp} with $K\in\Lrm\p 1(X\p c)$ and $W\in\Gscr\p 1(\mu\p X)$. Therefore, denoting by $[N,N]$ the quadratic variation of $N$, by the definition of $W\ast(\mu\p X-\nu\p X)$, we get
\[
[N,N]_t=\aPP{K\cdot X\p c}{K\cdot X\p c}_t+\sum_{0\leq s\leq t}\wt W_s\p 2,\quad t\geq0.
\]
Hence, we can estimate each addend in the previous identity by $[N,N]_\infty$. Thus, both $\aPP{K\cdot X\p c}{K\cdot X\p c}\p{p/2}$ and $(\sum_{0\leq s\leq\cdot}\tilde W_s\p 2)\p{p/2}$ belong to $\Ascr\p+$, since $[N,N]\p{p/2}$ does. Therefore, $K\in\Lrm\p p(X\p c)$ and $W\in\Gscr\p p(\mu\p X)$, meaning that $X$ possesses the $\Hscr\p p$-WRP. Since $p>1$ is arbitrary, we deduce that $X$ has the $\Hscr\p p$-WRP for every $p\geq1$. Conversely, let $X$ possess the $\Hscr\p p$-WRP, for some $p>1$. We show that $X$ possesses the $\Hscr\p 1$-WRP. From this and the previous step, we deduce that $X$ has the $\Hscr\p q$-WRP for every $q\geq1$. From \cite[Proposition 2.39]{J79}, $\Hscr\p p$ is dense in $(\Hscr\p1, \|\cdot\|_{\Hscr\p1})$. Hence, for each $N\in\Hscr\p 1$ there exist $(N\p n)_n\subseteq \Hscr\p p$ such that $N\p n\longrightarrow N$ in $(\Hscr\p1, \|\cdot\|_{\Hscr\p1})$ as $n\rightarrow+\infty$ and, by assumption,
\[
N\p n=N_0\p n+K\p n\cdot X\p c+W\p n\ast(\mu\p X-\nu\p X),\quad K\p n\in\Lrm\p p(X\p c),\quad W\p n\in\Gscr\p p(\mu\p X).
\]
Using this relation, \cite[Eq.\ (4.58) and Remark 4.61(2)]{J79} and BDG's inequality, we see that $(K\p n)_n$ is a Cauchy sequence in the space $(\Lrm\p 1(X\p c),\|\cdot\|_{\Lrm\p 1(X\p c)})$ and $(W\p n)_n$ is a Cauchy sequence in the space $(\Gscr\p1(\mu\p X),\|\cdot\|_{\Gscr\p1(\mu\p X)})$. Since, from \cite[Theorem 4.60]{J79}, the space $(\Lrm\p 1(X\p c),\|\cdot\|_{\Lrm\p 1(X\p c)})$ is complete, we find $K\in\Lrm\p 1(X\p c)$ which is the limit in $\Lrm\p 1(X\p c)$ of $(K\p n)_n$. Thus, by BDG's inequality, we obtain $\|(K\p n-K)\cdot X\p c\|_{\Hscr\p1}\leq c_1\|K\p n-K\|_{\Lrm\p 1(X\p c)}\longrightarrow 0$ as $n\rightarrow+\infty$, where $c_1>0$ is a constant. Concerning $(W\p n)_n$, by BDG's inequality, there exists a constant $C_1>0$ such that
\[
\|(W\p n-W\p m)\ast(\mu\p X-\nu\p X)\|_{\Hscr\p 1}\leq C_1\|W\p n-W\p m\|_{\Gscr\p 1(\mu\p X)}.
\]
Thus, $(W\p n\ast(\mu\p X-\nu\p X))_n$ is a Cauchy sequence in $\Kscr\p1(\mu\p X):=\{W\ast(\mu\p X-\nu\p X),\ W\in\Gscr\p1(\mu\p X)\}$ which, because of \cite[Theorem 4.46]{J79}, is a closed subspace of $(\Hscr\p 1,\|\cdot\|_{\Hscr\p 1})$. So, there exists $W$ in $\Gscr\p1(\mu\p X)$ such that $W\p n\ast(\mu\p X-\nu\p X)\longrightarrow W\ast(\mu\p X-\nu\p X)$ in $(\Hscr\p 1,\|\cdot\|_{\Hscr\p 1})$ as $n\rightarrow+\infty$, which, again by BDG's inequality, implies $W\p n\longrightarrow W$ in $(\Gscr\p1(\mu\p X), \|\cdot\|_{\Gscr\p1(\mu\p X)})$ as $n\rightarrow+\infty$. Taking the limit in $\Hscr\p1$ yields
\[
N=\lim_{n\rightarrow+\infty}N\p n=\lim_{n\rightarrow+\infty}\big(N_0\p n+K\p n\cdot X\p c+W\p n\ast(\mu\p X-\nu\p X)\big)=N_0+K\cdot X\p c+W\ast(\mu\p X-\nu\p X)
\]
and the proof of the lemma is complete.
\end{proof}
According to Proposition \ref{prop:eq.wprp}, if there is no need to specify the involved martingale space $\Hscr\p p$, $p\geq1$, we will sometime simply say WRP instead of $\Hscr\p p$-WRP.  Furthermore, to verify that an $\Rbb\p d$-valued semimartingale $X$ possesses the $\Hscr\p1$-WRP with respect to $\Fbb$, it is enough to check that $X$ possesses the $\Hscr\p 2$-WRP with respect to $\Fbb$. For later use, the following lemma will be useful:
\begin{lemma}\label{lem:WRP.lim}
Let $(\xi\p n)_n\subseteq L\p2(\Om,\Fscr_\infty,\Pbb)$, $\xi\p n\longrightarrow\xi$ in $L\p2(\Om,\Fscr_\infty,\Pbb)$ as $n\rightarrow+\infty$. If 
\[
\xi\p n=\Ebb[\xi\p n|\Fscr_0]+K\p n\cdot X\p c_\infty+W\p n\ast(\mu\p X-\nu\p X)_\infty,\quad K\p n\in\Lrm\p2(X\p c,\Fbb),\quad W\p n\in\Gscr\p2(\mu\p X,\Fbb),
\]
then there exist $K\in\Lrm\p2(X\p c)$ and $W\in\Gscr\p2(\mu\p X)$ such that
\[
\xi=\Ebb[\xi|\Fscr_0]+K\cdot X\p c_\infty+W\ast(\mu\p X-\nu\p X)_\infty.
\]
\end{lemma}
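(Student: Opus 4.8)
The plan is to read the statement as a closedness assertion and to reproduce, in the Hilbert space $\Hscr\p2$, the completeness argument already used in the second half of Proposition \ref{prop:eq.wprp}; the advantage of working in $\Hscr\p2$ rather than $\Hscr\p1$ is that there the continuous and the purely discontinuous martingale parts are orthogonal, which decouples the two integrands cleanly. First I would pass from terminal values to martingales. Set $N\p n_t:=\Ebb[\xi\p n\,|\,\Fscr_t]$; since $\xi\p n\in L\p2(\Om,\Fscr_\infty,\Pbb)$ we have $N\p n\in\Hscr\p2(\Fbb)$, and because $K\p n\in\Lrm\p2(X\p c)$ and $W\p n\in\Gscr\p2(\mu\p X)$ both $K\p n\cdot X\p c$ and $W\p n\ast(\mu\p X-\nu\p X)$ lie in $\Hscr\p2$, so the hypothesis is the process identity $N\p n=\Ebb[\xi\p n\,|\,\Fscr_0]+K\p n\cdot X\p c+W\p n\ast(\mu\p X-\nu\p X)$. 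Since $\xi\p n\longrightarrow\xi$ in $L\p2$, Doob's inequality shows that $N\p n$ converges in $\Hscr\p2$ to the martingale $N_t:=\Ebb[\xi\,|\,\Fscr_t]$.

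The decisive step is the orthogonal decomposition. For fixed $n,m$ the martingale $N\p n-N\p m$ has continuous local martingale part $(K\p n-K\p m)\cdot X\p c$ and purely discontinuous part $(W\p n-W\p m)\ast(\mu\p X-\nu\p X)$, whose quadratic covariation vanishes, so the Pythagorean identity in $\Hscr\p2$ gives
\[
\begin{aligned}
\Ebb\big[\big((\xi\p n-\xi\p m)-\Ebb[\xi\p n-\xi\p m\,|\,\Fscr_0]\big)\p2\big]
&=\Ebb\big[\aPP{(K\p n-K\p m)\cdot X\p c}{(K\p n-K\p m)\cdot X\p c}_\infty\big]\\
&\quad+\Ebb\Big[\sum\nolimits_{s\geq0}\big(\wt{W\p n}_s-\wt{W\p m}_s\big)\p2\Big].
\end{aligned}
\]
Since $(\xi\p n)_n$ is Cauchy in $L\p2$ and conditional expectation is an $L\p2$-contraction, the left-hand side, and hence each of the two nonnegative terms on the right, tends to $0$ as $n,m\to+\infty$; by the very definitions of the semi-norms $\|\cdot\|_{\Lrm\p2(X\p c)}$ and $\|\cdot\|_{\Gscr\p2(\mu\p X)}$ recorded in Sections \ref{sec:bas} and \ref{sec:WRP}, this means that $(K\p n)_n$ is Cauchy in $(\Lrm\p2(X\p c),\|\cdot\|_{\Lrm\p2(X\p c)})$ and $(W\p n)_n$ is Cauchy in $(\Gscr\p2(\mu\p X),\|\cdot\|_{\Gscr\p2(\mu\p X)})$.

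With the integrands decoupled, I would finish exactly as in Proposition \ref{prop:eq.wprp}: by \cite[Theorem 4.60]{J79} there is $K\in\Lrm\p2(X\p c)$ with $K\p n\to K$ in $\Lrm\p2(X\p c)$, and by \cite[Theorem 4.46]{J79} the set $\Kscr\p2(\mu\p X):=\{W\ast(\mu\p X-\nu\p X):W\in\Gscr\p2(\mu\p X)\}$ is closed in $\Hscr\p2$, yielding $W\in\Gscr\p2(\mu\p X)$ with $W\p n\ast(\mu\p X-\nu\p X)\to W\ast(\mu\p X-\nu\p X)$ in $\Hscr\p2$. Passing to terminal values then gives $K\p n\cdot X\p c_\infty\to K\cdot X\p c_\infty$ and $W\p n\ast(\mu\p X-\nu\p X)_\infty\to W\ast(\mu\p X-\nu\p X)_\infty$ in $L\p2$, while $\Ebb[\xi\p n\,|\,\Fscr_0]\to\Ebb[\xi\,|\,\Fscr_0]$, and taking the $L\p2$-limit in the representation of $\xi\p n$ delivers the asserted representation of $\xi$.

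I expect the only genuinely delicate point to be the justification of the orthogonal split, namely that $(K\p n-K\p m)\cdot X\p c$ and $(W\p n-W\p m)\ast(\mu\p X-\nu\p X)$ really are the continuous and the purely discontinuous parts of $N\p n-N\p m$ and that they are $\Hscr\p2$-orthogonal. Once this is secured the convergence bookkeeping is routine and mirrors the $\Hscr\p1$ computation already carried out for Proposition \ref{prop:eq.wprp}.
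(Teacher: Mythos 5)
Your proof is correct and follows essentially the route the paper intends: the lemma's proof is omitted with the remark that it parallels Proposition \ref{prop:eq.wprp}, and your argument is exactly that parallel — pass to the $\Hscr\p2$-martingales $N\p n$, show $(K\p n)_n$ and $(W\p n)_n$ are Cauchy, invoke completeness of $\Lrm\p2(X\p c)$ and closedness of the integral subspace, and take terminal values. Your only deviation is using the $\Hscr\p2$ orthogonality of the continuous and purely discontinuous parts (Pythagoras) in place of the BDG estimates used in the $\Hscr\p1$ argument, which is a legitimate and slightly cleaner way to decouple the two integrand sequences.
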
 
The proof of Lemma \ref{lem:WRP.lim} can be given in a similar fashion as the one of Proposition \ref{prop:eq.wprp} and is, therefore, omitted.

In the next remark we compare the WRP with another representation property for local martingales, that is, the \emph{predictable representation property} (from now on PRP).

{\pa For a process $X$, we denote in this paper by $\Fbb\p X$ the smallest filtration satisfying the usual conditions such that $X$ is adapted.}
\begin{remark}[WRP and PRP]\label{rem:wrp.prp}
Let $X\in\Hscr\p1_\mathrm{loc}$ be an $\Fbb$-local martingale. We recall that $X$ possesses the PRP with respect to $\Fbb$ if every $\Fbb$-local martingale $N$ can be represented as
\[
N=N_0+H\cdot X,\quad H\in\Lrm\p1_\mathrm{loc}(X,\Fbb).
\]

From \cite[Theorem 13.14]{HWY92}, we know that the PRP implies the WRP. The converse is, in general, not true. As a counter-example we recall the case of a martingale $X$ which is a L\'evy process: In this case $X$ possesses the WRP {\pa with respect to $\Fbb\p X$} but it possesses the PRP {\pa with respect to $\Fbb\p X$} if and only if $X$ is a Brownian motion or a compensated Poisson process (see \cite[Corollary 13.54]{HWY92}).
\end{remark}

\begin{remark}\label{rem:wrp}
{\pa We now list some examples from the literature of semimartingales possessing the WRP (and hence the $\Hscr\p p$-WRP, for every $p\geq1$). }
\begin{itemize}
\item From \cite[Theorem 13.14]{HWY92}, every local martingale $X$ with the PRP (predictable representation property) possess also the WRP. Classical examples are therefore obtained assuming that $X$ is a Brownian motion or a compensated Poisson process with respect to the filtration $\Fbb\p X$.  
\item A less classical situation is the case in which $X=(X\p1,\ldots,X\p d)$ is a $d$-dimensional continuous local martingale such that $\aPP{X\p i}{X\p j}$ is deterministic and $X\p i_0=0$, $i,j=1,\ldots,d$. According to \cite[Theorem 7.1]{DTE16}, we deduce that $X$ has the PRP with respect to $\Fbb\p X_T$, for an arbitrary but fixed $T>0$ (that is, the stable subspace generated by $X$ in $\Hscr\p2_T(\Fbb\p X)$ equals $\Hscr\p2_T(\Fbb\p X)$). A special example is given by a $d$-dimensional Brownian motion $X$. Notice that in this latter case $\aPP{X\p i}{X\p j}=0$ holds, $i,j=1,\ldots,d$ $i\leq j$.

\item In \cite{E89}, Emery studied the \emph{chaotic representation property} of the Az\'ema martingales. This is a special class of square integrable martingales, obtained as solutions of a particular structure equation. An Az\'ema martingale $X$ has, in particular, the following property: For every $t\geq0$, it holds $\aPP{X}{X}_t=ct$, $c>0${\pa, that is, Az\'ema martingales are \emph{normal martingales}}. Notice that Az\'ema martingales have not, in general, independent increments (see \cite[p.\ 79]{E89} for an example). The Brownian motion and the compensated Poisson processes are examples of Az\'ema martingales. In \cite[Proposition 6]{E89}, Emery proved that some Az\'ema martingales $X$ possess the chaotic (and hence the predictable and the weak) representation property with respect to $\Fbb\p X$. 
\item In \cite{J75}, Jacod proved that if $\mu$ is a multivariate point process and $\nu$ its compensator with respect to the filtration $\Fbb\p\mu$, that is, the smallest right-continuous filtration with respect to which $\mu$ is an optional random measure (see \cite[(A.1), p.\ 36]{J75}), then every $\Fbb\p\mu$-local martingale can be represented as a stochastic integral with respect to $\mu-\nu$ (see \cite[Theorem 5.4]{J75}). Using this result, one can prove that any step process $X$ (see \cite[11.55]{HWY92}) possesses the WRP with respect to $\Fbb\p X$ (see \cite[Theorem 13.19]{HWY92}). It can be shown that, if $B$ is a Brownian motion with respect to $\Fbb\p B$ and $X$ is a step process independent of $B$, then the semimartingale $Y=B+X$ has the WRP with respect to $\Fbb\p Y$ (see \cite[Corollary 2]{WG82}).

\item In \cite{KW67}, Kunita and Watanabe established in the example on p.\ 227 and in Proposition 5.2 the WRP for a L\'evy processes $X$ with respect to $\Fbb\p X$ (see also \cite[Theorem 1.1]{K04}). 

\item In \cite[Theorem III.4.34]{JS00} the WRP has been obtained for a semimartingale $X$ with conditionally independent (and not necessarily homogeneous) increments, with respect to the right-continuous filtration $\Fbb$ generated by $\Fbb\p X$ and by an initial $\sig$-field $\Hscr$ (see \cite[III.2.12]{JS00}). 
\end{itemize}
\end{remark}

\section{Progressively Enlarged Filtrations: A Brief Summary}\label{sec:prog.en}

We denote by $\tau$ a $(0,+\infty]$-valued random variable and call $\tau$ a \emph{random time}. The \emph{default process} $H=(H_t)_{t\geq0}$ associated with $\tau$ is defined by $H_t(\om):=1_{[\tau,+\infty)}(\om,t)$. The filtration generated by $H$ is denoted by $\Hbb=(\Hscr_t)_{t\geq0}$. We stress that $H_0=0$ so that $H$ is an increasing process in the sense of \cite[Definition I.3.1]{JS00}.

For a filtration $\Fbb=(\Fscr_t)_{t\geq0}$ satisfying the usual conditions, $\Gbb=(\Gscr_t)_{t\geq0}$ denotes the \emph{progressive enlargement of} $\Fbb$ \emph{by} $\tau$ and it is defined by

\[
 \Gscr_t:=\bigcap_{\ep>0} \wt\Gscr_{t+\ep},\quad t\geq0,
\] 
where $\wt\Gscr_t=\Fscr_t\vee\Hscr_t$, $t\geq0$, and $\wt \Gbb=(\wt\Gscr_t)_{t\geq0}$.
That is, $\Gbb$ is the \emph{smallest} right-continuous (and, hence, satisfying the usual conditions) filtration containing $\Fbb$ and such that $\tau$ is a $\Gbb$-stopping time.
 
\begin{definition}\label{def:av.im} Let $\tau:\Om\longrightarrow(0,+\infty]$ be a random time.

(i) We say that $\tau$ satisfies hypothesis $(\Ascr)$, if $\tau$ avoids $\Fbb$-stopping times, that is, if for every $\Fbb$-stopping time $\sig$, $\Pbb[\tau=\sig<+\infty]=0$ holds. 

(ii) We say that $\tau$ satisfies hypothesis $(\Hscr)$, if $\Fbb$ is immersed in $\Gbb$, that is, if $\Fbb$-martingales remain $\Gbb$-martingales.
\end{definition}
 
 The following assumptions will play a key role in this paper:

\begin{assumption}\label{ass:av.im}
The random time $\tau$ satisfies both hypotheses $(\Ascr)$ and $(\Hscr)$ (see Definition \ref{def:av.im}).
\end{assumption}

We now shortly comment Assumptions \ref{ass:av.im}. Hypothesis $(\Ascr)$ is widely used in the literature about progressively enlarged filtrations, especially if $\Fbb$-martingales are not all continuous. In this paper, which deals with the propagation of the WRP to the filtration $\Gbb$, it is natural (and convenient) to assume  hypothesis $(\Ascr)$. Indeed, if $(\Ascr)$ is not satisfied, as a limit case, it can happen that $\tau$ is an $\Fbb$ stopping time: Hence, $\Fbb$ and $\Gbb$ coincide.  To role out this trivial case, we require $(\Ascr)$. The interpretation of $(\Ascr)$ is also very natural: We are enlarging $\Fbb$ adding some completely new information, which is not contained in $\Fbb$. 

It is well known (see \cite[Theorem 3.2 (a)]{AJ17}) that hypothesis $(\Hscr)$ is equivalent to the conditional independence of $\Fscr_\infty$ and $\Gscr_t$ given $\Fscr_t$. Therefore, it is obvious that hypothesis $(\Hscr)$ is satisfied if $\tau$ is independent of $\Fbb$. Another important case, especially for applications to credit risk, in which the immersion property is satisfied, is when the random time $\tau$ is obtained by the \emph{Cox construction} (see \cite[Section 2.3 and Lemma 2.28]{AJ17}). 
\\[.5em]
\indent The next lemma, which will be useful later, exhibits a generating system for the $\sig$-algebra $\wt\Gscr_t$, for every $t\geq0$.
\begin{lemma}\label{lem:gen.sys} For every $t\geq0$, let us introduce the system
\[
\Cscr_t:=\{\xi\in\wt\Gscr_t:\ \xi=\zt(1-H_s),\quad \zt\ \textnormal{ bounded and } \Fscr_t\textnormal{-measurable},\ 0\leq s\leq t\}.
\]
Then, the $\sig$-algebra generated by $\Cscr_t$, denoted by $\sig(\Cscr_t)$, coincides with $\wt\Gscr_t$, for every $t\geq0$.
\end{lemma}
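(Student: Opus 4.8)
The plan is to prove the two set inclusions $\sig(\Cscr_t)\subseteq\wt\Gscr_t$ and $\wt\Gscr_t\subseteq\sig(\Cscr_t)$ separately. The inclusion $\sig(\Cscr_t)\subseteq\wt\Gscr_t$ is immediate from the definition of $\Cscr_t$: every $\xi\in\Cscr_t$ is of the form $\zt(1-H_s)$, a product of the $\Fscr_t$-measurable factor $\zt$ and the $\Hscr_s$-measurable factor $1-H_s$ with $s\leq t$, hence $\xi$ is $\wt\Gscr_t=\Fscr_t\vee\Hscr_t$-measurable; therefore the $\sig$-algebra generated by $\Cscr_t$ cannot exceed $\wt\Gscr_t$.

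For the reverse inclusion I would use that $\wt\Gscr_t$ is, by definition, the smallest $\sig$-algebra containing both $\Fscr_t$ and $\Hscr_t$, so it suffices to check $\Fscr_t\subseteq\sig(\Cscr_t)$ and $\Hscr_t\subseteq\sig(\Cscr_t)$. Both follow from extreme choices of the two free parameters defining $\Cscr_t$. First, taking $s=0$ and recalling $H_0=0$, one has $\zt(1-H_0)=\zt$, so every bounded $\Fscr_t$-measurable random variable lies in $\Cscr_t$; since such random variables generate $\Fscr_t$, this gives $\Fscr_t\subseteq\sig(\Cscr_t)$. Second, taking $\zt\equiv1$, one has $1-H_s\in\Cscr_t$ for every $s\in[0,t]$, so each $H_s$ with $s\leq t$ is $\sig(\Cscr_t)$-measurable; as $\Hscr_t=\sig(H_s:\,s\leq t)$, this gives $\Hscr_t\subseteq\sig(\Cscr_t)$. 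Combining the two yields $\wt\Gscr_t=\Fscr_t\vee\Hscr_t\subseteq\sig(\Cscr_t)$, completing the argument.

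There is no genuinely hard step here; the content of the lemma is simply that the single parametrized family $\{\zt(1-H_s)\}$ already contains, through the degenerate choices $s=0$ and $\zt\equiv1$, separate generating families for $\Fscr_t$ and for $\Hscr_t$. The only points meriting a line of justification are the identity $H_0=0$ (so that $s=0$ recovers all of $\Fscr_t$) and the fact that $\Hscr_t$ is exactly the $\sig$-algebra generated by $(H_s)_{s\leq t}$, both of which are recorded in the paper's set-up.
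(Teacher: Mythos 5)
Your proof is correct and follows essentially the same route as the paper's: both establish the trivial inclusion $\sig(\Cscr_t)\subseteq\wt\Gscr_t$ and then recover $\Fscr_t$ via the choice $s=0$ (using $H_0=0$, which holds because $\tau$ is $(0,+\infty]$-valued) and $\Hscr_t$ via the choice $\zt\equiv1$. The only cosmetic difference is that the paper uses indicators $1_A(1-H_0)$ where you use general bounded $\Fscr_t$-measurable $\zt$; the argument is otherwise identical.
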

\begin{proof}
Notice that the inclusion $\sig(\Cscr_t)\subseteq\wt\Gscr_t$ obviously holds. It is therefore enough to verify the converse inclusion  $\wt\Gscr_t\subseteq\sig(\Cscr_t)$. To this goal, it is sufficient to prove that each $A\in\Fscr_t$ is $\sig(\Cscr_t)$-measurable and that $H_s$ is $\sig(\Cscr_t)$-measurable, for every $0\leq s\leq t$. Since $1_A=1_A(1-H_0)$ for every $A\in\Fscr_t$, we have $\Fscr_t\subseteq\sig(\Cscr_t)$, for every $t\geq0$. Furthermore, we have $(1-H_s)=1(1-H_s)$ for every $0\leq s\leq t$. So, $H_s$ is $\sig(\Cscr_t)$-measurable for every $0\leq s\leq t$ and hence the inclusion $\wt\Gscr_t\subseteq\sig(\Cscr_t)$ holds. The proof is complete.
\end{proof}
The assumption that $\tau$ takes value in $(0,+\infty]$ simplifies the proof of Lemma \ref{lem:gen.sys}: If we only have $\Pbb[\tau=0]=0$ (as it happens if, e.g., hypothesis $(\Ascr)$ is satisfied {\pa and $\tau$ takes values in $[0,+\infty]$}), then Lemma \ref{lem:gen.sys} holds replacing $\sig(\Cscr_t)$ by $\sig(\Cscr_t)\vee\Nscr$, $\Nscr$ denoting the family of the $\Pbb$-null sets in $\Fscr$.
\\[.5em]
\indent In this paper we denote by $A$ the $\Fbb$-optional projection of the the right-continuous and bounded process $(1-H)$. Then $A$ is \cadlag, $\Fbb$-adapted and satisfies
\begin{equation}\label{eq:def.az.supm}
A_t=\Pbb[\tau>t|\Fscr_t],\quad\textnormal{a.s.},\quad t\geq0.
\end{equation}
In particular, being bounded, $A$ is a supermartingale of \emph{class} $(D)$ with respect to $\Fbb$, called \emph{the Az\'ema supermartingale}, where we recall that an $\Fbb$-adapted process $X$ is of \emph{class} (D) with respect to $\Fbb$ if the family $\{X_\sig: \sig\ \textnormal{ finite-valued } \Fbb\textnormal{-stopping time}\}$ is uniformly integrable. 

Notice that $A_0=1$. From classical  literature on martingale theory (see, e.g., \cite[Theorem 2.62 and p.\ 63]{HWY92}), it follows that $A_->0$ on $[0,\tau]$ and that $A>0$ on $[0,\tau)$. 
 
We denote by $H\p{o,\Fbb}$ the $\Fbb$-dual optional projection and  by $H\p{p,\Fbb}$ the $\Fbb$-dual predictable projection (see \cite[Theorem V.28]{D72}) of $H$. 

{\pa For the formulation of the next result, we refer to \cite[Lemma 1.48]{AJ17}.}
\begin{lemma}\label{lem:con.dop}
Let $\tau$ satisfy hypothesis $(\Ascr)$. Then,  $H\p{o,\Fbb}$ is continuous and  $H\p{p,\Fbb}=H\p{o,\Fbb}$.
\end{lemma}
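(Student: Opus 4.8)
The plan is to establish first that $H^{o,\Fbb}$ is continuous, and then to deduce the identity $H^{p,\Fbb}=H^{o,\Fbb}$ as an easy consequence of this continuity. Since $H$ is a bounded (hence integrable) raw increasing process with respect to $\Fbb$, its dual optional projection $H^{o,\Fbb}$ is a well-defined increasing, $\Fbb$-optional (in particular \cadlag\ and $\Fbb$-adapted) process. The heart of the matter is the \emph{jump formula} for the dual optional projection: for every $\Fbb$-stopping time $T$,
\[
\Delta H^{o,\Fbb}_T\,1_{\{T<+\infty\}}=\Ebb\big[\Delta H_T\,1_{\{T<+\infty\}}\,\big|\,\Fscr_T\big]\quad\textnormal{a.s.}
\]
I would obtain this directly from the defining property of $H^{o,\Fbb}$, namely $\Ebb[\int U_s\,\rmd H^{o,\Fbb}_s]=\Ebb[\int U_s\,\rmd H_s]$ for all bounded $\Fbb$-optional processes $U$, by testing against $U_s(\om)=Z(\om)1_{\{s=T(\om)<+\infty\}}$ with $Z$ an arbitrary bounded $\Fscr_T$-measurable random variable; since $\Delta H^{o,\Fbb}_T\,1_{\{T<+\infty\}}$ is itself $\Fscr_T$-measurable, this identifies it with the asserted conditional expectation.

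Next I would exploit that $H$ jumps only at $\tau$ and only by $1$, so that $\Delta H_T\,1_{\{T<+\infty\}}=1_{\{\tau=T<+\infty\}}$. Hypothesis $(\Ascr)$ gives $\Pbb[\tau=T<+\infty]=0$ for every $\Fbb$-stopping time $T$, whence the right-hand side of the jump formula vanishes and $\Delta H^{o,\Fbb}_T=0$ a.s.\ on $\{T<+\infty\}$. To upgrade this from single stopping times to genuine continuity, I would use the structure of $\Fbb$-optional processes of finite variation: the jump set $\{\Delta H^{o,\Fbb}>0\}$ is an $\Fbb$-optional thin set and is therefore exhausted by a countable family of $\Fbb$-stopping times $(T_n)_n$ with disjoint graphs (a standard consequence of the optional section theorem; see, e.g., \cite{HWY92}). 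Applying the jump formula to each $T_n$ yields $\Delta H^{o,\Fbb}_{T_n}=0$ a.s., and since these exhaust all possible jumps, $H^{o,\Fbb}$ is continuous.

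Finally, for the equality $H^{p,\Fbb}=H^{o,\Fbb}$ I would observe that the dual predictable projection factors through the dual optional projection: testing the defining identities of both projections against an arbitrary bounded $\Fbb$-predictable process $V$ (which is in particular $\Fbb$-optional) gives
\[
\Ebb\Big[\int V_s\,\rmd H^{p,\Fbb}_s\Big]=\Ebb\Big[\int V_s\,\rmd H_s\Big]=\Ebb\Big[\int V_s\,\rmd H^{o,\Fbb}_s\Big],
\]
which says precisely that $H^{p,\Fbb}$ is the dual predictable projection of $H^{o,\Fbb}$. But $H^{o,\Fbb}$ is continuous and $\Fbb$-adapted, hence $\Fbb$-predictable, so it coincides with its own dual predictable projection; therefore $H^{p,\Fbb}=H^{o,\Fbb}$.

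I expect the continuity step to be the main obstacle: once the jump formula for $H^{o,\Fbb}$ is in place and combined with avoidance, the conclusion is immediate, but correctly justifying that formula and the exhaustion of the jump set by $\Fbb$-stopping times is the delicate part. The concluding identity $H^{p,\Fbb}=H^{o,\Fbb}$ is then routine.
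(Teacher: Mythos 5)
Your proposal is correct and follows essentially the same route as the paper: both show that the jumps of $H\p{o,\Fbb}$ at $\Fbb$-stopping times vanish by combining the defining property of the dual optional projection with hypothesis $(\Ascr)$, then pass to full continuity via the optional section theorem (your exhaustion of the thin jump set by countably many stopping times is just that theorem in another guise), and finally deduce $H\p{p,\Fbb}=H\p{o,\Fbb}$ from the fact that a continuous adapted process is predictable. Your conditional-expectation form of the jump formula is slightly stronger than the paper's expectation identity $\Ebb[\Delta H\p{o,\Fbb}_\sig]=\Ebb[\Delta H_\sig]=0$ (which suffices because the jumps are nonnegative), but this is a cosmetic difference, not a different argument.
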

\begin{proof}
If $H\p{o,\Fbb}$ is continuous, then it is $\Fbb$-predictable, $H\p{o,\Fbb}$ being $\Fbb$-adapted. Hence, the identity $H\p{p,\Fbb}=H\p{o,\Fbb}$ holds. We now come to the continuity of $H\p{o,\Fbb}$. Let $\sig$ be a finite-valued $\Fbb$-stopping time. Then, $\Delta H_\sig\p{o,\Fbb}=1_{[0,\sig]}\cdot H\p{o,\Fbb}_\infty-1_{[0,\sig)}\cdot H\p{o,\Fbb}_\infty$ and $1_{[0,\sig)}$ is an $\Fbb$-optional process. The properties of the $\Fbb$-dual optional projection and hypothesis $(\Ascr)$ now yield $\Ebb[\Delta H\p{o,\Fbb}_\sig]=\Ebb[\Delta H_\sig]=\Pbb[\tau=\sig]=0$. Hence, since $H\p{o,\Fbb}$ is increasing, we get $H\p{o,\Fbb}_\sig=H\p{o,\Fbb}_{\sig-}$ a.s.\ for every {\pa finite-valued} $\Fbb$-stopping time $\sig$. By the optional section theorem (see \cite[Theorem IV.13]{D72}), we obtain the continuity of $H\p{o,\Fbb}$. The proof is complete.
\end{proof}

We now discuss some properties of the Az\'ema supermartingale $A$, which we shall need later.
 
\begin{proposition}\label{prop:con.Z} Let $\tau$ satisfy Assumptions \ref{ass:av.im}. Then $A$ is decreasing, continuous and  $A>0$ on $[0,\tau]$.
\end{proposition}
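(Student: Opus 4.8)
The plan is to assemble three facts: hypothesis $(\Hscr)$ delivers monotonicity, hypothesis $(\Ascr)$ rules out jumps, and the sign information on $A_-$ already recorded before the statement upgrades to strict positivity of $A$ itself once continuity is known.

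\emph{Step 1 (decreasing, from immersion).} According to the characterisation of $(\Hscr)$ recalled after Assumptions \ref{ass:av.im} (equivalently \cite[Theorem 3.2(a)]{AJ17}), $\Fscr_\infty$ and $\Gscr_t$ are conditionally independent given $\Fscr_t$. Since $\tau$ is a $\Gbb$-stopping time we have $\{\tau>t\}\in\Gscr_t$, so this conditional independence yields, for each fixed $t\geq0$, the identity $A_t=\Pbb[\tau>t\,|\,\Fscr_t]=\Pbb[\tau>t\,|\,\Fscr_\infty]$ a.s. To turn this pointwise-in-$t$ identity into a pathwise statement, I would fix a regular conditional distribution $F(\om,\rmd x)$ of $\tau$ given $\Fscr_\infty$ (which exists because $\tau$ takes values in the Polish space $(0,+\infty]$) and set $\tilde A_t(\om):=F(\om,(t,+\infty])$. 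For every $\om$ the map $t\mapsto\tilde A_t(\om)$ is right-continuous and decreasing, being the tail of a probability measure, and $\tilde A_t=A_t$ a.s.\ for each $t$ by the displayed identity. As $A$ and $\tilde A$ are both \cadlag\ and agree a.s.\ on a countable dense set of times, they are indistinguishable; hence $A$ is, up to a null set, decreasing.

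\emph{Step 2 (continuity, from avoidance).} With the same version, $\Delta A_t=\tilde A_t-\tilde A_{t-}=-F(\om,\{t\})$, so for any $\Fbb$-stopping time $\sig$ one has $\Delta A_\sig\,1_{\{\sig<\infty\}}=-F(\om,\{\sig\})\,1_{\{\sig<\infty\}}$. Since $\sig$ is $\Fscr_\infty$-measurable, $F(\cdot,\{\sig\})$ is a version of $\Pbb[\tau=\sig\,|\,\Fscr_\infty]$, whence
\[
\Ebb\big[\,|\Delta A_\sig|\,1_{\{\sig<\infty\}}\big]=\Ebb\big[F(\cdot,\{\sig\})\,1_{\{\sig<\infty\}}\big]=\Pbb[\tau=\sig<\infty]=0,
\]
the last equality being exactly hypothesis $(\Ascr)$. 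Because $A$ is decreasing we have $\Delta A_\sig\leq0$, and the above forces $\Delta A_\sig=0$ a.s.\ on $\{\sig<\infty\}$. As $\sig$ was an arbitrary $\Fbb$-stopping time and $A$ is $\Fbb$-optional, the optional section theorem (used exactly as in the proof of Lemma \ref{lem:con.dop}) shows that $\{\Delta A\neq0\}$ is evanescent, i.e.\ $A$ is continuous.

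\emph{Step 3 (positivity) and remarks.} The strict positivity is then immediate: it was already recorded before the statement that $A_->0$ on $[0,\tau]$, and since $A$ is now continuous we have $A=A_-$, so $A>0$ on $[0,\tau]$. An alternative route to continuity, closer in spirit to Lemma \ref{lem:con.dop}, is to use the decomposition $A=m-H\p{o,\Fbb}$ with $m:=A+H\p{o,\Fbb}$ an $\Fbb$-martingale (which one checks directly from the defining property of the dual optional projection); under $(\Ascr)$, Lemma \ref{lem:con.dop} gives $H\p{o,\Fbb}$ continuous, so $\Delta A=\Delta m$, and one is still left to prove that the martingale $m$ has no jumps — which is precisely what the section argument of Step 2 provides. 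I expect this last point, ruling out the jumps of the martingale part, to be the only genuine obstacle; the rest is bookkeeping. The care really needed lies in the passage from the fixed-$t$ conditional identities to pathwise (indistinguishability) statements, and in the legitimacy of evaluating the conditional law of $\tau$ at the random, $\Fscr_\infty$-measurable argument $\sig$.
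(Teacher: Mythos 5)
Your proof is correct, and its first and third steps coincide with the paper's: monotonicity comes from the immersion identity $A_t=\Pbb[\tau>t\,|\,\Fscr_\infty]$ (the paper cites \cite[Theorem 3.2(c)]{AJ17}; your regular-conditional-distribution construction just makes the passage from fixed-$t$ identities to an indistinguishable decreasing version explicit), and positivity follows from $A_->0$ on $[0,\tau]$ together with $A=A_-$. Where you diverge is the continuity step. The paper does not rerun a section-theorem argument on $A$: it invokes \cite[Proposition 3.9(a)]{AJ17}, by which under immersion the Doob--Meyer decomposition of $A$ is $A=1-H\p{o,\Fbb}$ --- i.e.\ the martingale part is the \emph{constant} $1$ --- so continuity of $A$ is exactly continuity of $H\p{o,\Fbb}$, which is Lemma \ref{lem:con.dop}. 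You instead apply the avoidance-plus-optional-section mechanism directly to $A$, computing $\Ebb[|\Delta A_\sig|1_{\{\sig<\infty\}}]=\Pbb[\tau=\sig<\infty]=0$ via the conditional law of $\tau$ given $\Fscr_\infty$; this is the same engine as Lemma \ref{lem:con.dop} transplanted from $H\p{o,\Fbb}$ to $A$, and it is sound (the evaluation of $F(\cdot,\{\sig\})$ at the $\Fscr_\infty$-measurable argument $\sig$, which you rightly flag, is the standard freezing argument for regular conditional distributions). The one inaccuracy is in your closing remark: in the decomposition $A=m-H\p{o,\Fbb}$ there is no ``genuine obstacle'' of ruling out jumps of $m$, because under hypothesis $(\Hscr)$ one has $m\equiv 1$; that observation is precisely what lets the paper dispense with a second section argument. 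Your route buys independence from \cite[Proposition 3.9(a)]{AJ17} at the cost of redoing the section argument; the paper's route is shorter given Lemma \ref{lem:con.dop}.
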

\begin{proof}
By hypothesis $(\Hscr)$, we have $A_t=\Pbb[\tau>t|\Fscr_\infty]$ (see \cite[Theorem 3.2(c)]{AJ17}) from which we see that $A$ is decreasing. The continuity of $A$ follows by Lemma \ref{lem:con.dop} because, by \cite[Proposition 3.9(a)]{AJ17}, the Doob--Meyer decomposition of $A$ is $A=1-H\p{o,\Fbb}$. To see the last part, we observe that $A_->0$ on $[0,\tau]$ and $A=A_-$. The proof is complete.
\end{proof}

Since $H\in\Ascr\p+(\Gbb)$, $H$ being bounded, there exists the $\Gbb$-dual predictable projection $\Lm\p\Gbb$ of $H$, that is, $\Lm\p\Gbb$ is the unique $\Gbb$-predictable process in $\Ascr\p+(\Gbb)$ such that $M=(M_t)_{t\geq0}$, defined by
\begin{equation}\label{eq:def.M}
M_t:=H_t-\Lm\p\Gbb_t,\quad t\geq0,
\end{equation}
is a local martingale (see \cite[Theorem I.3.17]{JS00}). Since $H$ belongs to $\Ascr\p+(\Gbb)$, we also have that $\Lm\p\Gbb$ belongs to $\Ascr\p+(\Gbb)$. Hence,  we have $\Ebb[\sup_{t\geq0}|M_t|]\leq1+\Ebb[\Lm\p\Gbb_\infty]<+\infty$ which implies that $M$ is a true martingale and that it belongs to $\Hscr\p1(\Gbb)$. From \cite[Proposition 2.15]{AJ17}, we have
\begin{equation}\label{eq:G.com.gen}
\Lm\p\Gbb_t=\int_0\p{\tau\wedge t}\frac{1}{A_{s-}}\,\rmd H\p {p,\Fbb}_s,\quad t\geq0.
\end{equation}

By $U:=\Escr(-M)$ we denote the stochastic exponential (see \cite[Chapter I, Section 4f]{JS00}) of the martingale $-M$. The following theorem summarizes some relevant properties of $M$, $\Lm\p\Gbb$ and $U$.
\begin{theorem}\label{thm:G.com.AH}
Let $\tau$ satisfy Assumptions \ref{ass:av.im}.

\textnormal{(i)} $\Lm\p\Gbb$ is continuous and $\Lm_t\p\Gbb=-\log( A_{\tau\wedge t})$, $t\geq0$, where we define $\log(0):=-\infty$.

\textnormal{(ii)} $M\in\Hscr\p2(\Gbb)$, $M_0=0$ and $M\p2-\Lm\p{\Gbb}\in\Hscr\p1(\Gbb)$, that is, $\pb{M}{M}=\Lm\p{\Gbb}$.

\textnormal{(iii)} $U\in\Hscr\p2_\mathrm{loc}(\Gbb)$ and $U=A\p{-1}1_{[0,\tau)}$.
\end{theorem}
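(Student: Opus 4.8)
The plan is to prove the three items in order, extracting everything from the explicit formula \eqref{eq:G.com.gen} for $\Lm\p\Gbb$ together with the structural facts already in hand: that $A$ is continuous, decreasing and strictly positive on $[0,\tau]$ (Proposition \ref{prop:con.Z}); that $H\p{p,\Fbb}=H\p{o,\Fbb}$ is continuous with $A=1-H\p{o,\Fbb}$ (Lemma \ref{lem:con.dop} and the proof of Proposition \ref{prop:con.Z}); and that $M\in\Hscr\p1(\Gbb)$ with $\Lm\p\Gbb\in\Ascr\p+(\Gbb)$ being the $\Gbb$-dual predictable projection of $H$.

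For (i), I would first observe that continuity of $\Lm\p\Gbb$ is immediate from \eqref{eq:G.com.gen}: its integrand $1/A_{s-}$ is locally bounded on $[0,\tau]$ (since $A>0$ there) and $H\p{p,\Fbb}$ is continuous. For the closed form I substitute $\rmd H\p{p,\Fbb}_s=-\rmd A_s$ and $A_{s-}=A_s$ into \eqref{eq:G.com.gen}, which gives $\Lm\p\Gbb_t=-\int_0\p{\tau\wedge t}A_s\p{-1}\,\rmd A_s$. Because $A$ is continuous, of finite variation and strictly positive on $[0,\tau]$, the chain rule for Stieltjes integrals applied to $x\mapsto\log x$ yields $\int_0\p{\tau\wedge t}A_s\p{-1}\,\rmd A_s=\log A_{\tau\wedge t}-\log A_0=\log A_{\tau\wedge t}$, using $A_0=1$; hence $\Lm\p\Gbb_t=-\log A_{\tau\wedge t}$.

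For (ii), note $M_0=H_0-\Lm\p\Gbb_0=0$ and $M\in\Hscr\p1(\Gbb)$ is already known. Since $\Lm\p\Gbb$ is continuous by (i), we have $\Delta M=\Delta H$, and as $M=H-\Lm\p\Gbb$ is of finite variation its continuous martingale part vanishes; therefore $[M,M]=\sum_{s\leq\cdot}(\Delta H_s)\p2=H$, because $H$ has a single unit jump at $\tau$. Then $\Ebb[[M,M]_\infty]=\Pbb[\tau<+\infty]\leq1$, so BDG's inequality gives $M\in\Hscr\p2(\Gbb)$. Finally, the predictable quadratic variation $\pb{M}{M}$ is by definition the $\Gbb$-dual predictable projection of $[M,M]=H$, which is exactly $\Lm\p\Gbb$; thus $\pb{M}{M}=\Lm\p\Gbb$ and $M\p2-\Lm\p\Gbb\in\Hscr\p1(\Gbb)$.

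For (iii), I would evaluate $U=\Escr(-M)$ via the Dol\'eans--Dade formula. As the continuous martingale part of $-M$ vanishes and $\Delta M=\Delta H$, this reads $U_t=\exp(-M_t)\prod_{s\leq t}(1-\Delta H_s)\rme\p{\Delta H_s}$. On $\{\tau\leq t\}$ the factor attached to the jump of $H$ at $\tau$ is $(1-1)\rme=0$, so $U_t=0$; on $\{\tau>t\}$ the product equals $1$ and, since $H_t=0$ and $\tau\wedge t=t$, part (i) gives $M_t=-\Lm\p\Gbb_t=\log A_t$, whence $\exp(-M_t)=A_t\p{-1}$. This produces $U=A\p{-1}1_{[0,\tau)}$. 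For integrability I would write $U=1-U_-\cdot M$; the process $U_-$, being the left-limit of a \cadlag\ adapted process, is locally bounded, and $M\in\Hscr\p2(\Gbb)$, so $U_-\cdot M\in\Hscr\p2_{\mathrm{loc}}(\Gbb)$ and therefore $U\in\Hscr\p2_{\mathrm{loc}}(\Gbb)$. The computational heart is part (i), on which both (ii) and (iii) rest; the step demanding the most care is the explicit evaluation in (iii), where one must correctly match the single jump of $H$ at $\tau$ against the Dol\'eans--Dade product and then read off the two regimes $\{\tau>t\}$ and $\{\tau\leq t\}$.
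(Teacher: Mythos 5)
Your proposal is correct and follows essentially the same route as the paper: identify $H\p{p,\Fbb}=1-A$ and integrate $1/A$ against $-\rmd A$ to get the logarithm in (i), use continuity of $\Lm\p\Gbb$ to get $[M,M]=H$ and hence $\pb{M}{M}=\Lm\p\Gbb$ in (ii), and evaluate the Dol\'eans--Dade product in (iii), with integrability of $U$ read off from $U=1-U_-\cdot M$. The only cosmetic difference is that in (ii) you invoke directly that $\pb{M}{M}$ is the dual predictable projection of $[M,M]$, whereas the paper rederives this via the integration-by-parts identity $M\p2=2M_-\cdot M+H$; the two are equivalent.
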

\begin{proof} We verify (i). By Assumptions \ref{ass:av.im}, the identity $H\p{p,\Fbb}=1-A$ holds (see \cite[Theorem 3.2(c)]{AJ17} and Lemma \ref{lem:con.dop}). Hence, form Proposition \ref{prop:con.Z} and \eqref{eq:G.com.gen} we deduce (i). We now come to (ii). Clearly, $M_0=0$. The continuity of $\Lm\p\Gbb$ yields the identity $[M,M]=H$. Thus, from \cite[Theorem 7.32]{HWY92}, we deduce $M\in\Hscr\p2(\Gbb)$. Integration by parts implies $M\p2=2M_-\cdot M+H$. Since the stochastic integral in this identity is a $\Gbb$-local martingale, $M\p 2-\aPP{M}{M}$ is a $\Gbb$-local martingale if and only if $H-\aPP{M}{M}$ is a $\Gbb$-local martingale. By the uniqueness of the $\Gbb$-dual predictable projection of an increasing process, we deduce the identity $\pb{M}{M}=\Lm\p{\Gbb}$, which completes the proof of (ii). We now verify (iii). Because of the Dol\'eans--Dade equation, we have $U=1-U_-\cdot M$. So, from (ii), we deduce $U\in\Hscr\p2_\mathrm{loc}(\Gbb)$. From Proposition \ref{prop:con.Z}, we have $A>0$ on $[0,\tau]$ so that $A\p{-1}$ is well-defined over $[0,\tau]$. Using the Dol\'eans--Dade exponential formula, we compute
\[
U_t:=\Escr(-M)_t=\exp(-M_t)\prod_{0\leq s\leq t}(1-\Delta M_s)\exp(\Delta M_s)=\begin{cases}\exp(-\log( A_{t\wedge\tau})),\quad &\textnormal{on}\quad \{t<\tau\},\\
0,\quad&\textnormal{else}.
\end{cases}
\]
The proof of the theorem is complete.
\end{proof}

\section{The Weak Representation Property in the Progressive Enlargement}\label{sec:wea.prp.G}

In this section we consider an  $\Rbb\p d$-valued semimartingale $X$ with respect to the filtration $\Fbb$. The jump-measure of $X$ is $\mu\p X$ and $(B\p X,C\p X,\nu\p X)$ is the triplet of the semimartingale characteristics of $X$. We assume that $X$ possesses the WRP with respect to $\Fbb$. In particular, according to Proposition \ref{prop:eq.wprp}, every $N\in\Hscr\p2(\Fbb)$ can be represented as
\[
N=N_0+K\cdot X\p c+W\ast(\mu\p X-\nu\p X),\quad K\in\Lrm\p2(X\p c,\Fbb),\quad W\in\Gscr\p2(\mu\p X,\Fbb).
\]
We denote by $\tau$ a random time on $\Om$ with values in $(0,+\infty]$ and by $\Gbb$ the progressive enlargement of $\Fbb$ by $\tau$. 

\begin{lemma}\label{leb:XcharG}
Let $\tau$ satisfy hypothesis $(\Hscr)$. Then $X$ is a $\Gbb$-semimartingale and the $\Gbb$-semimartingale characteristics of $X$ are again $(B\p X,C\p X,\nu\p X)$.
\end{lemma}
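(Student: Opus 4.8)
The plan is to first establish that $X$ is a $\Gbb$-semimartingale and then to identify its $\Gbb$-characteristics by means of the martingale characterisation of the characteristic triplet through the characteristic function, letting hypothesis $(\Hscr)$ do all the work at once.

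For the semimartingale property, I would start from a decomposition $X=X_0+N+B$ of the $\Fbb$-semimartingale $X$ into an $\Fbb$-local martingale $N$ and an $\Fbb$-adapted process $B$ of finite variation. Hypothesis $(\Hscr)$ gives that $\Fbb$-martingales are $\Gbb$-martingales, and this extends to local martingales by localisation: if $(\sig_n)_n$ is an $\Fbb$-localising sequence for $N$, then each $\sig_n$ is a $\Gbb$-stopping time because $\Fscr_t\subseteq\Gscr_t$, so each $N\p{\sig_n}$ is a $\Gbb$-martingale and $N$ is a $\Gbb$-local martingale. Since $B$ is also $\Gbb$-adapted of finite variation, $X$ is a $\Gbb$-semimartingale. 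I would also record here that every $\Fbb$-predictable object is $\Gbb$-predictable (as $\Fbb\subseteq\Gbb$), so that $B\p X$, $C\p X$ and the random measure $\nu\p X$ are admissible candidates for the $\Gbb$-characteristics; the integrability conditions they satisfy are pathwise and hence unchanged.

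For the identification of the triplet, I would invoke the characterisation of the characteristics via the complex exponential martingales (see \cite[Theorem II.2.42]{JS00}). For $u\in\Rbb\p d$ set
\[
G\p u_t := \rmi\,u\p{tr}B\p X_t - \tfrac12\,u\p{tr}C\p X_t\,u + \big(\rme\p{\rmi\,u\p{tr}x}-1-\rmi\,u\p{tr}h(x)\big)\ast\nu\p X_t,\qquad t\geq0,
\]
where $h$ is the truncation function. Because $(B\p X,C\p X,\nu\p X)$ are by assumption the $\Fbb$-characteristics of $X$, for every $u$ the process
\[
L\p u_t := \rme\p{\rmi\,u\p{tr}X_t}-\int_0\p t\rme\p{\rmi\,u\p{tr}X_{s-}}\,\rmd G\p u_s,\qquad t\geq0,
\]
is an $\Fbb$-local martingale. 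Splitting $L\p u$ into real and imaginary parts and applying the extension of $(\Hscr)$ to local martingales from the previous step, I obtain that $L\p u$ is a $\Gbb$-local martingale for every $u\in\Rbb\p d$. Since $(B\p X,C\p X,\nu\p X)$ is an admissible $\Gbb$-predictable triplet, the sufficiency direction of the same characterisation, now read in $\Gbb$, forces $(B\p X,C\p X,\nu\p X)$ to be the $\Gbb$-characteristics of $X$, which is the claim.

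The step I expect to be the main obstacle is conceptual rather than computational: one must resist proving directly that $\nu\p X$ is the $\Gbb$-compensator of $\mu\p X$, since that would require testing against arbitrary $\Gbb$-predictable mappings and invoking the structural (Jeulin-type) decomposition of $\Gbb$-predictable processes. The martingale-problem formulation circumvents this by transferring all three characteristics simultaneously; the only genuinely necessary verifications are that $(\Hscr)$ passes from martingales to complex-valued local martingales by localisation along $\Fbb$-stopping times, and that $\Fbb$-predictability implies $\Gbb$-predictability. As a consistency check, note that $C\p X=[X\p c,X\p c]$ is a pathwise quadratic variation and is therefore filtration-independent, in agreement with the conclusion.
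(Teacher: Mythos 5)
Your proof is correct and follows essentially the same route as the paper, whose own argument for Lemma \ref{leb:XcharG} is simply to invoke hypothesis $(\Hscr)$ together with the martingale characterization of the semimartingale characteristics in Jacod--Shiryaev; you merely spell out the details (transfer of the local martingale property by localisation along $\Fbb$-stopping times, $\Fbb$-predictability implying $\Gbb$-predictability, and identification of the triplet via the complex exponential local martingales). No changes needed.
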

\begin{proof}
 Because of hypothesis $(\Hscr)$, $X$ is a semimartingale also in the filtration $\Gbb$. To compute the $\Gbb$-semimartingale characteristics of $X$ we apply hypothesis $(\Hscr)$ and
\cite[Theorem II.2.21]{JS00}. The proof is complete.
\end{proof}
Let $H=1_{[\tau,+\infty)}$ be the default process associated with $\tau$. We set $E:=\Rbb\p d\times\{0,1\}\subseteq\Rbb\p{d+1}$. If $\tau$ satisfies hypothesis $(\Hscr)$, the process $(X,H)\p{tr}$ is an $E$-valued $\Gbb$-semimartingale. To prove Theorem \ref{thm:wea.prp} below, we need the $\Gbb$-semimartingale characteristics $(B\p{(X,H)},C\p{(X,H)}, \nu\p{(X,H)})$ of the $E$-valued $\Gbb$-semimartingale $(X,H)\p{tr}$. The next step is devoted to the computation of these semimartingale characteristics. 

We denote by $x_1$ a vector of $\Rbb\p d$, while $x_2$ is a real number. We consider the $\Gbb$-semimartingale characteristics of $(X,H)\p{tr}$ with respect to the $\Rbb\p{d+1}$-valued truncation function \[h(x_1,x_2)=(h_1(x_1),h_2(x_2)):=(1_{\{|x_1|\leq1\}}x_1,1_{\{|x_2|\leq1\}}x_2).\] 

The jump measure $\mu\p H$ of $H$ is given by $\mu\p H(\om,\rmd t,\rmd x_2)=\rmd H_t(\om)\delta_1(\rmd x_2)$ and the compensator of $\mu\p H$ is $\nu\p H(\om,\rmd t,\rmd x_2)=\rmd \Lm\p\Gbb_t(\om)\delta_1(\rmd x_2)$. Hence, the $\Gbb$-semimartingale characteristics of $H$ with respect to the truncation function $h_2$ are $(\Lm\p\Gbb,0,\nu\p H)$.

Let hypothesis $(\Hscr)$ be in force. The second $\Gbb$-semimartingale characteristics of ${(X,H)}\p{tr}$ is completely determined by $C\p X$, since $C\p H=0$. Furthermore, we have $B\p{(X,H)}=(B\p X,\Lm\p\Gbb)\p{tr}$.
In summary, to determine $(B\p{(X,H)},C\p{(X,H)}, \nu\p{(X,H)})$, it is enough to compute the jump measure $\mu\p {(X,H)}$ of ${(X,H)}\p{tr}$ and then the $\Gbb$-predictable compensator $\nu\p {(X,H)}$ of $\mu\p {(X,H)}$: The next proposition is devoted to this goal.

\begin{proposition}\label{prop:ju.mea.Y} Let $\tau$ satisfy Assumptions \ref{ass:av.im}.

\textnormal{(i)}
The jump-measure $\mu\p {(X,H)}$ of the $E$-valued $\Gbb$-semimartingale ${(X,H)}\p{tr}$ is given by
\[
\mu\p {(X,H)}(\om,\rmd t,\rmd x_1,\rmd x_2)=\mu\p X(\om,\rmd t,\rmd x_1)\delta_0(\rmd x_2)+\rmd H_t(\om)\delta_1(\rmd x_2)\delta_0(\rmd x_1)\,.
\]

\textnormal{(ii)} The $\Gbb$-predictable compensator $\nu\p {(X,H)}$ of $\mu\p {(X,H)}$ is given by
{\pa\begin{equation}\label{eq:com.ans}
\nu\p {(X,H)}(\om,\rmd t,\rmd x_1,\rmd x_2)=\nu\p X(\om,\rmd t,\rmd x_1)\delta_0(\rmd x_2)+\rmd \Lm_t\p\Gbb(\om)\delta_1(\rmd x_2)\delta_0(\rmd x_1).
\end{equation}}
\end{proposition}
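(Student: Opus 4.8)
The plan is to treat the two parts in sequence, with (i) carrying the only genuinely new idea and (ii) reducing to the one-dimensional compensators already identified in the paper.

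For (i), the heart of the matter is to show that, under hypothesis $(\Ascr)$, the coordinates $X$ and $H$ never jump simultaneously. I would argue that the jump set $\{\Delta X\neq0\}$ is a thin $\Fbb$-optional set, hence covered by countably many graphs $[[\sig_n]]$ of $\Fbb$-stopping times $\sig_n$; since hypothesis $(\Ascr)$ gives $\Pbb[\tau=\sig_n<+\infty]=0$ for each $n$, it follows that $\Pbb[\Delta X_\tau\neq0,\,\tau<+\infty]=0$. As $H$ jumps only at $\tau$, with $\Delta H_\tau=1$, this shows that outside a $\Pbb$-null set at most one of $\Delta X_s$, $\Delta H_s$ is nonzero at any $s>0$. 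I would then write $\mu\p{(X,H)}=\sum_{s>0}1_{\{\Delta(X,H)_s\neq0\}}\delta_{(s,\Delta X_s,\Delta H_s)}$ and split the sum according to whether $X$ or $H$ jumps: on $\{\Delta X_s\neq0\}$ one has $\Delta H_s=0$, giving atoms $\delta_{(s,\Delta X_s,0)}$, and on $\{\Delta H_s\neq0\}$ one has $\Delta X_s=0$, giving atoms $\delta_{(s,0,1)}$. These two contributions are precisely $\mu\p X(\rmd t,\rmd x_1)\delta_0(\rmd x_2)$ and $\rmd H_t\,\delta_1(\rmd x_2)\delta_0(\rmd x_1)$, which is the claim.

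For (ii), let $\nu$ denote the right-hand side of \eqref{eq:com.ans}; it is a $\Gbb$-predictable random measure because $\nu\p X$ is $\Fbb$- (hence $\Gbb$-) predictable and $\Lm\p\Gbb$ is $\Gbb$-predictable. By uniqueness of the compensator it suffices to verify the characterizing property: for every $\Gbb$-predictable mapping $W$ with $|W|\ast\mu\p{(X,H)}\in\Ascr\p+_\mathrm{loc}(\Gbb)$ one has $|W|\ast\nu\in\Ascr\p+_\mathrm{loc}(\Gbb)$ and $W\ast\mu\p{(X,H)}-W\ast\nu\in\Hscr\p1_\mathrm{loc}(\Gbb)$. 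Exploiting $E=\Rbb\p d\times\{0,1\}$, I would pass to the slices $W_1(\om,t,x_1):=W(\om,t,x_1,0)$ and $g(\om,t):=W(\om,t,0,1)$, which are respectively a $\Gbb$-predictable mapping and a $\Gbb$-predictable process. Because the two parts of $\mu\p{(X,H)}$ and of $\nu$ are carried by the disjoint sets $\{x_2=0\}$ and $\{(x_1,x_2)=(0,1)\}$, the integrals split with no cross terms:
\[
W\ast\mu\p{(X,H)}=W_1\ast\mu\p X+g\cdot H,\qquad W\ast\nu=W_1\ast\nu\p X+g\cdot\Lm\p\Gbb.
\]

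It then remains to invoke the two one-dimensional compensation facts. By Lemma \ref{leb:XcharG} the $\Gbb$-compensator of $\mu\p X$ is again $\nu\p X$, so local integrability of $|W_1|\ast\mu\p X$ yields $|W_1|\ast\nu\p X\in\Ascr\p+_\mathrm{loc}(\Gbb)$ and $W_1\ast\mu\p X-W_1\ast\nu\p X\in\Hscr\p1_\mathrm{loc}(\Gbb)$; by the definition \eqref{eq:def.M} of $\Lm\p\Gbb$, the process $M=H-\Lm\p\Gbb$ lies in $\Hscr\p1(\Gbb)$, so local integrability of $|g|\cdot H$ yields $|g|\cdot\Lm\p\Gbb\in\Ascr\p+_\mathrm{loc}(\Gbb)$ and $g\cdot M=g\cdot H-g\cdot\Lm\p\Gbb\in\Hscr\p1_\mathrm{loc}(\Gbb)$. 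Since $|W|\ast\mu\p{(X,H)}=|W_1|\ast\mu\p X+|g|\cdot H$ dominates each of its nonnegative summands, both summands are locally integrable, so the two facts apply; adding them gives $|W|\ast\nu\in\Ascr\p+_\mathrm{loc}(\Gbb)$ and $W\ast\mu\p{(X,H)}-W\ast\nu\in\Hscr\p1_\mathrm{loc}(\Gbb)$, and uniqueness identifies $\nu$ with $\nu\p{(X,H)}$.

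The main obstacle is the no-common-jumps statement behind (i): it is the single point where hypothesis $(\Ascr)$ enters, and it is what makes the two jump sources of $(X,H)$ effectively orthogonal, so that both $\mu\p{(X,H)}$ and its compensator decouple into the separately-known $X$- and $H$-parts. Everything after that is bookkeeping; the only care-points are that the slices $W_1,g$ inherit $\Gbb$-predictability (from measurability of the evaluation maps) and that the summands inherit local integrability (by domination), both of which are routine.
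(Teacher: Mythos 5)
Your proof is correct and follows essentially the same route as the paper's: part (i) rests on the no-common-jumps consequence of hypothesis $(\Ascr)$, and part (ii) slices $W$ into its $x_2=0$ and $(x_1,x_2)=(0,1)$ components and reduces to the known $\Gbb$-compensators $\nu\p X$ (via Lemma \ref{leb:XcharG}) and $\Lm\p\Gbb$. The only difference is that you spell out the thin-set/exhaustion-by-stopping-times argument behind ``$X$ and $H$ have no common jumps,'' which the paper merely asserts, while the paper is slightly more explicit in (ii) about first treating $W\geq0$ and then passing to positive and negative parts.
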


\begin{proof} We start proving (i). First, we observe that $X$ {\pa is a $\Gbb$-semimartingale, because of hypothesis $(\Hscr)$. Therefore, $(X,H)\p{tr}$ is a $\Gbb$-semimartingale.} By definition, the jump-measure $\mu\p {(X,H)}$ of {\pa${(X,H)\p{tr}}$} is given by
\[
\mu\p {(X,H)}(\om,\rmd t,\rmd x_1,\rmd x_2)=\sum_{s>0}1_{\{\Delta {(X,H)}_s(\om)\neq0\}}\delta_{(s,\Delta {(X,H)}_s(\om))}(\rmd t,\rmd x_1,\rmd x_2).
\]
Since $\tau$ satisfies hypothesis $(\Ascr)$, the $\Gbb$-semimartingales $X$ and $H$ have no common jumps. Therefore, the identity
$\{\Delta X\neq0\}\cap\{\Delta H\neq0\}=\emptyset$ holds {\pa and we} have the inclusions $\{\Delta X\neq0\}\subseteq\{\Delta H=0\}$ and
$\{\Delta H\neq0\}\subseteq\{\Delta X=0\}$. Thus,  
\[
\begin{split}
\mu\p {(X,H)}(\om,\rmd t,\rmd x_1,\rmd x_2)&=\sum_{s>0}1_{\{\Delta X_s(\om)\neq0\}\cap\{ \Delta H_s(\om)=0\}}\delta_{(s,\Delta X_s(\om))}(\rmd t,\rmd x_1)\delta_0(\rmd x_2)\\&\qquad+\sum_{s>0}1_{\{\Delta X_s(\om)=0\}\cap\{ \Delta H_s(\om)\neq0\}}\delta_{(s,\Delta H_s(\om))}(\rmd t,\rmd x_2)\delta_0(\rmd x_1)
\\&=\left(\sum_{s>0}1_{\{\Delta X_s(\om)\neq0\}}\delta_{(s,\Delta X_s(\om))}(\rmd t,\rmd x_1)\right)\delta_0(\rmd x_2)\\&\qquad+\left(\sum_{s>0}1_{\{ \Delta H_s(\om)\neq0\}}\delta_{(s,\Delta H_s(\om))}(\rmd t,\rmd x_2)\right)\delta_0(\rmd x_1)
\\&=\mu\p X(\om,\rmd t,\rmd x_1)\delta_0(\rmd x_2)+\mu\p H(\om,\rmd t,\rmd x_2)\delta_0(\rmd x_1)
\\&=\mu\p X(\om,\rmd t,\rmd x_1)\delta_0(\rmd x_2)+\rmd H_t(\om)\delta_1(\rmd x_2)\delta_0(\rmd x_1)
\end{split}
\]
and the proof of (i) is complete. We now show (ii). To see that the $\Gbb$-predictable random measure $\nu\p {(X,H)}$ is the $\Gbb$-predictable compensator of $\mu\p {(X,H)}$, it is enough to verify $|W|\ast\nu\p {(X,H)}\in\Ascr\p+_\mathrm{loc}(\Gbb)$ and $(W\ast\mu\p {(X,H)}-W\ast\nu\p {(X,H)})\in\Hscr\p1_\mathrm{loc}(\Gbb)$, for all $\Gbb$-predictable mappings $W$on the space $E=\Rbb\p d\times\{0,1\}$ such that $|W|\ast\mu\p {(X,H)}$ belongs to $\Ascr\p+_\mathrm{loc}(\Gbb)$ (see \cite[Theorem II.1.8]{JS00}). Let now $W\ast\mu\p {(X,H)}\in\Ascr\p+_\mathrm{loc}(\Gbb)$, where $W\geq0$ is a $\Gbb$-predictable mapping. We define $W\p0(t,x_1):=W(t,x_1,0)$ and $W\p{0,1}(t):=W(t,0,1)$. Then $W\p 0$ is a $\Gbb$-predictable mapping, $W\p{0,1}$ is a $\Gbb$-predictable process and, from (i), we have:
\[
W\ast\mu\p {(X,H)}_t=W\p 0\ast\mu\p X_t+W\p{0,1}\cdot H_t, \quad t\geq0,
\]
meaning that $W\p{0}\ast\mu\p X\in\Ascr\p+_\mathrm{loc}(\Gbb)$ and $W\p{0,1}\cdot H\in\Ascr\p+_\mathrm{loc}(\Gbb)$, because of $W\geq0$. Since $\Lm\p\Gbb$ is the $\Gbb$-predictable compensator of $\mu\p H$ and since, because of Lemma \ref{leb:XcharG}, $\nu\p X$ is the $\Gbb$-predictable compensator of $\mu\p X$, this yields $W\p{0,1}\cdot \Lm\p\Gbb\in\Ascr\p+_\mathrm{loc}(\Gbb)$, $W\p{0}\ast\nu\p X\in\Ascr\p+_\mathrm{loc}(\Gbb)$,  $(W\p{0,1}\cdot H-W\p{0,1}\cdot \Lm\p\Gbb)\in\Hscr\p1_\mathrm{loc}(\Gbb)$ and $(W\p{0}\ast\mu\p X-W\p{0}\ast\nu\p X)\in\Hscr\p1_\mathrm{loc}(\Gbb)$. Hence, {\pa denoting by $\wt\nu\p {(X,H)}$ the predictable measure defined on the right-hand side of \eqref{eq:com.ans}, we have}
\[
{\pa W\ast\wt\nu\p {(X,H)}}=W\p{0}\ast\nu\p X+W\p{0,1}\cdot \Lm\p\Gbb\in\Ascr\p+_\mathrm{loc}
\]
and
\[
W\ast\mu\p {(X,H)}-{\pa W\ast\wt\nu\p {(X,H)}}=(W\p 0\ast\mu\p X-W\p{0}\ast\nu\p X)+(W\p{0,1}\cdot H-W\p{0,1}\cdot \Lm\p\Gbb)\in\Hscr\p1_\mathrm{loc}(\Gbb).
\] 
 If $W$ is an arbitrary $\Gbb$-predictable mapping and $|W|\ast\mu\p {(X,H)}\in\Ascr\p+_\mathrm{loc}(\Gbb)$, from the previous step, we deduce ${\pa|W|\ast\wt\nu\p {(X,H)}}\in\Ascr\p+_\mathrm{loc}$ and, applying the previous step to the positive and negative part of $W$, we additionally obtain $(W\ast\mu\p {(X,H)}-{\pa W\ast\nu\p {(X,H)}})\in\Hscr\p1_\mathrm{loc}(\Gbb)$. {\pa By \cite[Theorem I.1.8]{JS00}, the $\Gbb$-predictable compensator $\nu\p {(X,H)}$ of $\mu\p {(X,H)}$ coincides with $\wt\nu\p{(X,H)}$} and the proof of the proposition is complete. 
\end{proof}

We now consider an arbitrary but fixed time horizon $T>0$. We recall the notation $\Gbb_T=(\Gscr_t)_{t\in[0,T]}$ and $\Hscr\p2_T(\Gbb)$ is the space of square integrable $\Gbb$-adapted martingales restricted to the finite time interval $[0,T]$. Analogously $\Lrm\p2_T(X\p c,\Gbb)$ and $\Gscr\p2_T(\mu\p{(X,H)},\Gbb)$ are the spaces of $\Gbb$-predictable integrands for $X\p c$ and $\mu\p{(X,H)}-\nu\p{(X,H)}$, respectively, restricted to the finite time interval $[0,T]$. 

The following theorem, which is the main results of this paper, shows that the WRP of $X$ with respect to $\Fbb$ propagates to the semimartingale $(X,H)\p{tr}$ in the filtration $\Gbb_T$.

\begin{theorem}\label{thm:wea.prp}
Let $\tau$ satisfy Assumptions \ref{ass:av.im} and let $X$ be an $\Fbb$-semimartingale possessing the \textnormal{WRP} with respect to $\Fbb$. Let $T>0$ be an arbitrary but fixed time horizon. Then the $\Gbb$-semimartingale ${(X,H)}\p{tr}$ possesses the $\Hscr\p2$-\emph{WRP} with respect to $\Gbb_T$, that is, every $N\in\Hscr\p2_T(\Gbb)$ can be represented as
\begin{equation}\label{eq:rep.G}
N_t=N_0+K\cdot X\p c_t+W\ast(\mu\p{(X,H)}-\nu\p{(X,H)})_t,\quad t\in[0,T],\ K\in\Lrm_T\p2(X\p c,\Gbb),\ W\in\Gscr\p2_T(\mu\p{(X,H)},\Gbb).
\end{equation}
Furthermore, the representation \eqref{eq:rep.G} is unique on $[0,T]$ in the following sense: If $K\p\prime\in\Lrm_T\p 2(X\p c,\Gbb)$ and $W\p\prime\in\Gscr\p 2_T(\mu\p{(X,H)},\Gbb)$ are other integrands such that \eqref{eq:rep.G} holds, we then have $\|K-K\p\prime\|_{\Lrm_T\p p(X\p c,\Gbb)}=0$, $\|W-W\p\prime\|_{\Gscr\p p_T(\mu\p{(X,H)},\Gbb)}=0$ and $K\cdot X\p c_t=K\p\prime\cdot X\p c_t$, $W\ast(\mu\p{(X,H)}-\nu\p{(X,H)})_t=W\p\prime\ast(\mu\p{(X,H)}-\nu\p{(X,H)})_t$, for all $t\in[0,T]$ a.s.
\end{theorem}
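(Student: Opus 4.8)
The plan is to reduce the statement to the representation of the terminal value $\xi=N_T$ of a martingale $N\in\Hscr\p2_T(\Gbb)$: setting $N_t=\Ebb[\xi|\Gscr_t]$, the two stochastic integrals in \eqref{eq:rep.G} are genuine $\Gbb$-martingales on $[0,T]$, so a representation of $\xi$ at time $T$ propagates to the whole martingale $N$, and conversely every $\xi\in L\p2(\Om,\Gscr_T,\Pbb)$ arises this way. Denote by $\Rscr_T\subseteq L\p2(\Om,\Gscr_T,\Pbb)$ the set of terminal values admitting such a representation. Exactly as in the proof of Proposition \ref{prop:eq.wprp} (this is the $\Gbb$-analogue of Lemma \ref{lem:WRP.lim}), $\Rscr_T$ is a closed linear subspace, so it suffices to show that $\Rscr_T$ contains a total subset. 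By Lemma \ref{lem:gen.sys} and a monotone class argument, using that $(1-H_{s_1})(1-H_{s_2})=1-H_{s_1\vee s_2}$, the family $\{\eta(1-H_s):\eta\in L\p\infty(\Om,\Fscr_T,\Pbb),\ 0\le s\le T\}$ is total in $L\p2(\Om,\Gscr_T,\Pbb)$, so the whole problem reduces to representing these products.

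First I would dispose of the factor $\eta$ alone (the case $s=0$). The bounded $\Fbb$-martingale $L_t:=\Ebb[\eta|\Fscr_t]$ lies in $\Hscr\p2_T(\Fbb)$ and, by the $\Hscr\p2$-WRP of $X$, satisfies $L=L_0+K\cdot X\p c+W\ast(\mu\p X-\nu\p X)$. By immersion (Lemma \ref{leb:XcharG}) the continuous martingale part $X\p c$ and the compensator $\nu\p X$ are unchanged in $\Gbb$, while Proposition \ref{prop:ju.mea.Y} shows that $W\ast(\mu\p X-\nu\p X)$ coincides with $\bar W\ast(\mu\p{(X,H)}-\nu\p{(X,H)})$ for $\bar W(t,x_1,x_2):=W(t,x_1)1_{\{x_2=0\}}$; moreover immersion gives $\Ebb[\eta|\Gscr_t]=L_t$. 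Thus $\eta\in\Rscr_T$, with integrands supported on the $x_2=0$ part of the jump space.

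The core of the argument is the representation of $Y_t:=\Ebb[\eta(1-H_s)|\Gscr_t]$ for $0<s\le T$. Using the classical key lemma together with immersion (so that $A_s=\Pbb[\tau>s|\Fscr_\infty]$) I would compute $Y$ explicitly: on $[s,T]$ one has $Y_t=1_{\{\tau>s\}}L_t$, while on $[0,s)$ one has $Y_t=U_t\Phi_t$, where $U_t=1_{\{\tau>t\}}/A_t=\Escr(-M)_t$ is the $\Gbb$-martingale of Theorem \ref{thm:G.com.AH}(iii), $M=H-\Lm\p\Gbb$ is the compensated default martingale, and $\Phi_t:=\Ebb[A_s\eta|\Fscr_t]$ is a bounded $\Fbb$-martingale, again representable by the WRP of $X$. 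On $[0,s)$ the product rule gives $\rmd(U\Phi)=U_-\,\rmd\Phi+\Phi_-\,\rmd U+\rmd[U,\Phi]$; since $\rmd U=-U_-\,\rmd M$, the middle term is $-\Phi_-U_-\cdot M$, which is the integral of $W\p\prime(t,x_1,x_2)=-\Phi_{t-}U_{t-}1_{\{x_2=1\}}$ against $\mu\p{(X,H)}-\nu\p{(X,H)}$, while the first term contributes the integral of $U_-K\p\Phi$ against $X\p c$ and of $U_-W\p\Phi$ against the $x_2=0$ part of $\mu\p{(X,H)}-\nu\p{(X,H)}$. The bracket $[U,\Phi]$ vanishes because $M\p c=0$ (as $\Lm\p\Gbb$ is continuous, Theorem \ref{thm:G.com.AH}(i)) and, by hypothesis $(\Ascr)$, $X$ and $H$ have no common jumps, so $U$, which jumps only at $\tau$, and $\Phi$, which jumps only on $\{\Delta X\ne0\}$, are never simultaneously discontinuous. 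On $[s,T]$, $Y_t=1_{\{\tau>s\}}L_t$ is likewise the integral of $1_{\{\tau>s\}}K\p L$ against $X\p c$ and of $1_{\{\tau>s\}}W\p L$ against the $x_2=0$ part. Finally I would paste the two pieces at $s$: continuity of $A$ and $\Pbb[\tau=s]=0$ (a consequence of $(\Ascr)$, $s$ being an $\Fbb$-stopping time) yield $Y_{s-}=U_{s-}\Phi_{s-}=1_{\{\tau>s\}}L_s=Y_s$, so the piecewise integrands assemble into globally $\Gbb$-predictable $K$ and $W$ on $[0,T]$; since $Y$ is bounded, the quadratic-variation estimate used in Proposition \ref{prop:eq.wprp} forces $K\in\Lrm\p2_T(X\p c,\Gbb)$ and $W\in\Gscr\p2_T(\mu\p{(X,H)},\Gbb)$, whence $Y\in\Rscr_T$.

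Uniqueness follows as in Proposition \ref{prop:eq.wprp}: if two representations hold, their difference $K\p{\prime\prime}\cdot X\p c+W\p{\prime\prime}\ast(\mu\p{(X,H)}-\nu\p{(X,H)})=0$ splits into its continuous and purely discontinuous martingale parts, which must vanish separately; computing the quadratic variations and taking expectations gives $\|K\p{\prime\prime}\|_{\Lrm\p2_T(X\p c,\Gbb)}=0$ and $\|W\p{\prime\prime}\|_{\Gscr\p2_T(\mu\p{(X,H)},\Gbb)}=0$. The main obstacle is the third paragraph: obtaining the explicit form of $Y$ from the key lemma, carrying out the product-rule computation on $[0,s)$, and, above all, checking that the resulting pieces are genuine integrands of the classes $\Lrm\p2_T(X\p c,\Gbb)$ and $\Gscr\p2_T(\mu\p{(X,H)},\Gbb)$ and that they glue across the fixed time $s$ without creating a spurious jump; the vanishing of $[U,\Phi]$ and the continuity of $A$, both resting on $(\Ascr)$, are what make this possible.
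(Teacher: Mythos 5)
Your proposal follows essentially the same route as the paper's proof: reduce to terminal values, use the closedness of the set of representable terminal values (Lemma \ref{lem:WRP.lim}), represent the generators $\zt(1-H_s)$ by writing the conditional expectation as the product of the bounded $\Fbb$-martingale $\Ebb[\zt A_s|\Fscr_\cdot]$ (representable by the WRP of $X$ in $\Fbb$, which survives in $\Gbb$ by immersion) with the stopped exponential $U\p s$, $U=\Escr(-M)$, apply integration by parts, absorb the $M$-integral into the $\{x_2=1\}$ part of $\mu\p{(X,H)}-\nu\p{(X,H)}$, and kill the cross bracket via hypothesis $(\Ascr)$ together with $U\p c=0$. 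The paper performs the product-rule computation once, globally, on the stopped martingale $U\p s$ rather than piecewise on $[0,s)$ and $[s,T]$ with a gluing at $s$, but the content is identical, as is the square-integrability argument via the mutual orthogonality of the three pieces and $[Z,Z]\in\Ascr\p+$.

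The one point that needs repair is your totality claim. The family $\{\eta(1-H_s):\ \eta\in L\p\infty(\Om,\Fscr_T,\Pbb),\ 0\le s\le T\}$ generates $\wt\Gscr_T=\Fscr_T\vee\Hscr_T$, whereas the terminal values you must reach are measurable with respect to $\Gscr_T=\bigcap_{\ep>0}\wt\Gscr_{T+\ep}$, which may be strictly larger; so totality in $L\p2(\Om,\Gscr_T,\Pbb)$ does not follow from Lemma \ref{lem:gen.sys} as you invoke it. The paper circumvents this by fixing $u>T$, proving the representation (on $[0,u]$, hence on $[0,\infty)$) for every $\xi\in L\p2(\Om,\wt\Gscr_u,\Pbb)$ using $\Fscr_u$-measurable $\zt$ and $0\le s\le u$, and then, for $\xi$ which is $\Gscr_T$-measurable and therefore $\wt\Gscr_u$-measurable, taking the conditional expectation with respect to $\Gscr_T$ on both sides; since the stochastic integrals are genuine $\Gbb$-martingales, this truncates the integrands to $[0,T]$ and yields \eqref{eq:rep.xi}. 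With that adjustment your argument is complete and coincides with the paper's.
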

\begin{remark}\label{comm:ass}
A seminal paper about the propagation of the PRP is Kusuoka \cite{Ku99}. In \cite{Ku99}, Kusuoka assumed that $\Fbb$ is the filtration generated by a Brownian motion and he considered a finite-valued random time $\tau$. Furthermore, Kusuoka assumed that $\Fbb$ is immersed in $\Gbb$ and that the $\Gbb$-predictable compensator $\Lm\p\Gbb$ of the default process $H$ is absolutely continuous with respect to the Lebesgue measure. This latter assumption implies, in particular, that $\tau$ avoids finite-valued $\Fbb$-stopping times. Indeed, let $\Lm\p\Gbb$ be absolutely continuous with respect to the Lebesgue measure. Then $\Lm\p\Gbb$ is, in particular, continuous. If now $\sig$ is a finite-valued $\Fbb$-stopping time, both $1_{[0,\sig]}$ and $1_{[0,\sig)}$ are $\Fbb$-predictable (and hence $\Gbb$-predictable) processes, $\Fbb$ being the Brownian filtration. Therefore, using the properties of the $\Gbb$-dual predictable projection, we get the identity $\Pbb[\tau=\sig]=\Ebb[\Delta H_\sig]=\Ebb[\Delta \Lm\p\Gbb_\sig]=0$, meaning that $\tau$ avoids finite-valued $\Fbb$-stopping times. Hence, for a finite valued random time $\tau$, Assumptions \ref{ass:av.im} are weaker than those in \cite{Ku99}. Therefore, since in the Brownian case the WRP and the PRP coincides, Theorem \ref{thm:wea.prp} is a generalization of the result about the PRP obtained in \cite{Ku99}.
\end{remark}

We now come to the proof of Theorem \ref{thm:wea.prp}.

\begin{proof}[Proof of Theorem \ref{thm:wea.prp}]
We first discuss the uniqueness of the representation \eqref{eq:rep.G}. We observe that the stochastic integral with respect to $X\p c$ defines an isometry between the spaces $(\Hscr\p2_T(\Gbb),\|\cdot\|_2)$ and $(\Lrm\p2_T(X\p c,\Gbb),\|\cdot\|_{\Lrm_T\p2(X\p c,\Gbb)})$ and the stochastic integral  with respect to $\mu\p{(X,H)}-\nu\p{(X,H)}$ defines an isometry between the spaces $(\Hscr\p2_T(\Gbb),\|\cdot\|_2)$ and $(\Gscr_T\p2(\mu\p{(X,H)},\Gbb),\|\cdot\|_{\Gscr\p2_T(\mu\p{(X,H)},\Gbb)})$. From this, the claim about the uniqueness of the representation \eqref{eq:rep.G} easily follows.

To prove the representation \eqref{eq:rep.G}, it is sufficient to verify that each $\xi\in L\p2(\Om,\Gscr_T,\Pbb)$ can be represented as
\begin{equation}\label{eq:rep.xi}
\xi=\Ebb[\xi|\Gscr_0]+K\cdot X\p c_T+W\ast(\mu\p {(X,H)}-\nu\p {(X,H)})_T,\quad K\in\Lrm_T\p2(X\p c,\Gbb),\quad W\in\Gscr\p2_T(\mu\p {(X,H)},\Gbb),
\end{equation}
since $(\Hscr\p2_T(\Gbb),\|\cdot\|_2)$ is isomorphic to $(L\p2(\Om,\Gscr_T,\Pbb),\|\cdot\|_2)$. 
 
We show \eqref{eq:rep.xi} by an application of the monotone class theorem. We consider a time $u>T$ arbitrary but fixed. We recall that $\wt\Gscr_u=\Fscr_u\vee\Hscr_u$. As a first step, we prove that every $\wt\Gscr_u$-measurable and square integrable random variable $\xi$ has the representation
\begin{equation}\label{eq:rep.Gu}
\xi=\Ebb[\xi|\Gscr_0]+K\cdot X\p c_u+W\ast(\mu\p{(X,H)}-\nu\p{(X,H)})_u,\quad K\in\Lrm\p2(X\p c,\Gbb),\quad W\in\Gscr\p2(\mu\p X,\Gbb).
\end{equation}
Let us consider the system 
\[
\Cscr_u:=\{\xi\in\wt\Gscr_t:\ \xi=\zt(1-H_s),\quad \zt\ \textnormal{ bounded and } \Fscr_u\textnormal{-measurable},\ 0\leq s\leq u\}
\]
which, by Lemma \ref{lem:gen.sys}, generates $\wt\Gscr_u$. We now show \eqref{eq:rep.Gu} for $\xi\in\Cscr_u$. So, let $\xi=\zt(1-H_s)$, where $\zt$ is $\Fscr_u$-measurable and bounded and $0\leq s\leq u$ is arbitrary but fixed. Denoting by $A$ the Az\'ema supermartingale (cf.\ \eqref{eq:def.az.supm}), we define $\zt\p\prime:=\zt A_s$. Then $\zt\p\prime$ is $\Fscr_u$-measurable and bounded. Therefore, $Z\p\prime_t:=\Ebb[\zt\p\prime|\Gscr_t]$, $t\geq0$, is a bounded $\Gbb$-martingale {\pa with terminal value in $\wt\Gscr_u$. As a} consequence of the hypothesis $(\Hscr)$, $Z\p\prime_t=\Ebb[\zt\p\prime|\Fscr_t]$ (see \cite[Theorem 3.2(d)]{AJ17}), $t\geq0$, meaning that $Z\p\prime$ is in fact a bounded $\Fbb$-martingale. Hence, we find $K\in \Lrm\p2(X\p c,\Fbb)$ and $W\p\prime\in\Gscr\p2(\mu\p X,\Fbb)$ such that
\[
Z\p\prime_t=Z\p\prime_0+K\cdot X\p c_t+W\p\prime\ast(\mu\p X-\nu\p X)_t,\quad t\geq0.
\]
From Proposition \ref{prop:con.Z}, $A>0$ on $[0,\tau]$. Let $M$ be the martingale defined in \eqref{eq:def.M} and let $U:=\Escr(-M)$ be the stochastic exponential of $-M$. Then, from Theorem \ref{thm:G.com.AH} (iii), we get
\[
\zt(1-H_s)=\zt1_{[0,\tau)}(s)=\zt1_{[0,\tau)}(s)A_sA_s\p{-1}=Z\p\prime_uU\p s_u
\]
where $U\p s_t:=U_{s\wedge t}$, $t\geq0$. We define the bounded $\Gbb$-martingale $Z_t:=\Ebb[\zt(1-H_s)|\Gscr_t]$, $t\geq0$. Using the {\pa Dol\'eans-Dade} equation for $U$, i.e.,
\[
U_t=1-U_-\cdot M_t,\quad t\geq0,
\]
 and integration by parts, from \cite[Proposition II.1.30 b)]{JS00}, since $U$ and $Z\p\prime$ have no common jumps by hypothesis $(\Ascr)$, we get
\begin{equation}\label{eq:rep.X}
\begin{split}
Z_u&=Z\p\prime_uU\p s_u
\\&=Z\p\prime_0+U\p s_-\cdot Z\p\prime_u+Z\p\prime_-1_{[0,s]}\cdot U_u
\\&=Z\p\prime_0+U\p s_-K\cdot X\p c_u+U\p s_-W\p\prime\ast(\mu\p X-\nu\p X)_u-Z\p\prime_-U_-1_{[0,s]}\cdot M_u.
\end{split}
\end{equation}
We clearly have $Z\in\Hscr\p2(\Gbb)$, and hence $[Z,Z]\in\Ascr\p+(\Gbb)$. Furthermore, denoting by $Y\p j$, $j=1,2,3$, the first, the second and the third integral on the right-hand side in the previous formula, we see that, by hypothesis $(\Ascr)$, $[Y\p i, Y\p j]=0$, $i\neq j$. Therefore, we deduce the inclusions $1_{[0,u]}U\p s_-K\in \Lrm\p2(X\p c,\Gbb)$, $1_{[0,u]}U\p s_-W\p\prime\in\Gscr\p2(\mu\p X,\Gbb)$ and $Z\p\prime_-U_-1_{[0,s]}\in \Lrm\p2(M,\Gbb)$. We now define
\[
W(\om,t,x_1,x_2)=U\p s_{t-}(\om)W\p\prime(\om,t,x_1)1_{\{x_2=0\}}-Z\p\prime_-U_-1_{[0,s]}1_{\{x_1=0,x_2=1\}}.
\]
From Proposition \ref{prop:ju.mea.Y} and the continuity of $\Lm\p\Gbb$, we get
\[
\wt W_t(\om)=U_{t-}\p s(\om)\wt W\p\prime_t(\om)-Z\p\prime_{t-}(\om)U_{t-}(\om)1_{[0,s]}(t)\Delta H_t(\om),\quad t\geq0.
\]
Hence, we obtain the inclusion $1_{[0,u]}W\in\Gscr\p2(\mu\p {(X,H)},\Gbb)$ and the identity
\[
1_{[0,u]}W\ast(\mu\p {(X,H)}-\nu\p {(X,H)})=1_{[0,u]}U\p s_-W\p\prime\ast(\mu\p X-\nu\p X)-Z\p\prime_-1_{[0,u]}U_-1_{[0,s]}\cdot M,
\]
since both sides in the previous identity are purely discontinuous local martingale with the same jumps.
Therefore, \eqref{eq:rep.X} becomes
\[
\begin{split}
\zt(1-H_s)&=Z\p\prime_0+1_{[0,u]}U\p s_-K\cdot X\p c_u+1_{[0,u]}W\ast(\mu\p {(X,H)}-\nu\p {(X,H)})_u
\end{split},
\]
meaning that \eqref{eq:rep.Gu} holds for all $\xi\in\Cscr_u$ and the proof for this elementary case is complete. We denote by $\Bb(\wt\Gscr_u)$ the system of the $\wt\Gscr_u$-measurable and bounded random variables and let $\Kscr\subseteq \Bb(\wt\Gscr_u)$ be the subfamily of $\Bb(\wt\Gscr_u)$ consisting of the random variables which can be represented as in \eqref{eq:rep.Gu}. Then, from the previous step, $\Cscr_u\subseteq\Kscr$. Moreover, $\Kscr$ is a monotone class of $\Bb(\wt\Gscr_u)$: Take $(\xi\p n)_{n\geq1}\subseteq\Kscr$ uniformly bounded such that $\xi\p n\uparrow \xi$ as $n\rightarrow+\infty$. Then
\[
\begin{split}
\xi\p n&=\Ebb[\xi\p n|\Gscr_0]+K\p n\cdot X_u\p c+W\p n\ast(\mu\p{(X,H)}-\nu\p{(X,H)})_u\\&=\Ebb[\xi\p n|\Gscr_0]+1_{[0,u]}K\p n\cdot X_\infty\p c+1_{[0,u]}W\p n\ast(\mu\p{(X,H)}-\nu\p{(X,H)})_\infty
\end{split}
\]
with $K\p n\in \Lrm\p2(X\p c,\Gbb)$, $W\p n\in\Gscr\p2(\mu\p{(X,H)},\Gbb)$. Furthermore,
 $\xi$ is bounded, $\wt\Gscr_u\subseteq\Gscr_u$-measurable and, by dominated convergence, $\xi\p n\uparrow \xi$ in $L\p2(\Om,\wt\Gscr_u,\Pbb)$ as $n\rightarrow+\infty$. Thus, from Lemma \ref{lem:WRP.lim}, we have
\[
\xi=\Ebb[\xi|\Gscr_0]+K\cdot X\p c_\infty+W\ast(\mu\p{(X,H)}-\nu\p{(X,H)})_\infty,\quad K\in\Lrm\p2(X\p c,\Gbb),\quad W\in\Gscr\p2(\mu\p{(X,H)},\Gbb).
\]
Since the stochastic integrals on the right-hand side of the previous expression are $\Gbb$-martingales, by the $\wt\Gscr_u\subseteq\Gscr_u$-measurability of $\xi$, taking the conditional expectation with respect to $\Gscr_u$, we see that the inclusion $\xi\in\Kscr$ holds. The monotone class theorem (see \cite[Theorem 1.4]{HWY92}) now yields $\Bb(\wt\Gscr_u)\subseteq\Kscr$. Let $\xi\in L\p2(\Om,\wt\Gscr_u,\Pbb)$ and $\xi\geq0$. Defining $\xi\p n:=\xi\wedge n$, by dominated convergence, we see that $\xi\p n\longrightarrow\xi$ in $L\p2(\Om,\wt\Gscr_u,\Pbb)$ as $n\rightarrow+\infty$. Therefore, from the previous step and Lemma \ref{lem:WRP.lim}, 
\[
\xi=\Ebb[\xi|\Gscr_0]+K\cdot X\p c_\infty+W\ast(\mu\p{(X,H)}-\nu\p{(X,H)})_\infty,\quad K\in\Lrm\p2(X\p c,\Gbb),\quad W\in\Gscr\p2(\mu\p{(X,H)},\Gbb).
\]
Now, taking the conditional expectation with respect to $\Gscr_u$ as above, we see that $\xi$ can be represented as in \eqref{eq:rep.Gu}. For an arbitrary $\xi\in L\p2(\Om,\wt\Gscr_u,\Pbb)$, it is enough to apply the previous step to the positive and the negative part of $\xi$ and then to use the linearity of the involved stochastic integrals, to see that $\xi$ can be represented as in \eqref{eq:rep.Gu}. The proof of \eqref{eq:rep.Gu} for an arbitrary $\xi\in L\p2(\Om,\wt\Gscr_u,\Pbb)$ is now complete. Let now $\xi\in L\p2(\Om,\Gscr_T,\Pbb)$. Since $\xi$ is $\Gscr_T$-measurable and $u>T$, then $\xi$ is also $\wt\Gscr_u$-measurable. By the previous step, we see that $\xi$ can be represented as in \eqref{eq:rep.Gu}. Taking now the conditional expectation with respect to $\Gscr_T$ in \eqref{eq:rep.Gu} and using that the stochastic integrals on the right-and side of \eqref{eq:rep.Gu} are all $\Gbb$-martingales, by the $\Gscr_T$-measurability of $\xi$, we get \eqref{eq:rep.xi} for every $\xi\in L\p2(\Om,\Gscr_T,\Pbb)$. The proof of the theorem is now complete.
\end{proof}

Combining Theorem \ref{thm:wea.prp} and Proposition \ref{prop:eq.wprp}, we get the following corollary.

\begin{corollary}
Let $\tau$ be a random time satisfying Assumptions \ref{ass:av.im} and let $X$ be an $\Fbb$-semimartingale possessing the \textnormal{WRP} with respect to $\Fbb$. Let $T>0$ be an arbitrary but fixed finite time horizon. Then, the semimartingale $(X,H)\p{tr}$ possesses the $\Hscr\p p$-\textnormal{WRP} with respect to the filtration $\Gbb_T$, for every $p\geq1$, that is, every $N\in\Hscr\p p_T(\Gbb)$ can be represented as
\begin{equation}\label{eq:hpwrp.G}
N_t=N_0+K\cdot X\p c_t+W\ast(\mu\p{(X,H)}-\nu\p{(X,H)})_t,\quad t\in[0,T],\quad K\in\Lrm_T\p p(X\p c,\Gbb),\quad W\in\Gscr\p p_T(\mu\p{(X,H)},\Gbb).
\end{equation}
Furthermore, the representation \eqref{eq:hpwrp.G} is unique on $[0,T]$ in the following sense: If $K\p\prime\in\Lrm_T\p p(X\p c,\Gbb)$ and $W\p\prime\in\Gscr\p p_T(\mu\p{(X,H)},\Gbb)$ are other integrands such that \eqref{eq:hpwrp.G} holds, we then have $\|K-K\p\prime\|_{\Lrm_T\p p(X\p c,\Gbb)}=0$, $\|W-W\p\prime\|_{\Gscr\p p_T(\mu\p{(X,H)},\Gbb)}=0$ and $K\cdot X\p c_t=K\p\prime\cdot X\p c_t$, $W\ast(\mu\p{(X,H)}-\nu\p{(X,H)})_t=W\p\prime\ast(\mu\p{(X,H)}-\nu\p{(X,H)})_t$, for all $t\in[0,T]$ a.s.
\end{corollary}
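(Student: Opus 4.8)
The plan is to read off the corollary by combining the $\Hscr\p2$-WRP provided by Theorem \ref{thm:wea.prp} with the equivalence of all $\Hscr\p p$-WRPs from Proposition \ref{prop:eq.wprp}, now applied to the $\Rbb\p{d+1}$-valued $\Gbb$-semimartingale $(X,H)\p{tr}$ rather than to $X$. First I would record that the continuous local martingale part of $(X,H)\p{tr}$ is $X\p c$: since $H$ is increasing of finite variation we have $C\p H=0$, so the continuous local martingale part of $(X,H)\p{tr}$ equals $(X\p c,0)\p{tr}$, which I identify with $X\p c$. With this identification the representation \eqref{eq:rep.G} of Theorem \ref{thm:wea.prp} is exactly the $\Hscr\p2$-WRP of $(X,H)\p{tr}$ with respect to $\Gbb_T$ in the sense of Definition \ref{def:w.prp}.

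Next I would invoke Proposition \ref{prop:eq.wprp} for $(X,H)\p{tr}$. Although that proposition is phrased over the infinite horizon for a generic semimartingale and filtration, its proof rests only on the density of $\Hscr\p p$ in $(\Hscr\p1,\|\cdot\|_{\Hscr\p1})$, the completeness of $(\Lrm\p1(X\p c),\|\cdot\|_{\Lrm\p1(X\p c)})$, the closedness of the integral subspace $\Kscr\p1(\mu\p{(X,H)})$ in $(\Hscr\p1,\|\cdot\|_{\Hscr\p1})$, and BDG's inequality; all of these are structural facts on any stochastic basis satisfying the usual conditions and therefore carry over verbatim to $[0,T]$ with $\Gbb_T$ and $(X,H)\p{tr}$. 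Applying the converse direction of Proposition \ref{prop:eq.wprp} with $p=2$, the $\Hscr\p2$-WRP yields the $\Hscr\p1$-WRP, and the direct direction then gives the $\Hscr\p p$-WRP for every $p\geq1$, i.e.\ \eqref{eq:hpwrp.G}.

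For uniqueness the $\Hscr\p2$-isometry used in Theorem \ref{thm:wea.prp} is unavailable, so I would argue directly. If $(K,W)$ and $(K\p\prime,W\p\prime)$ both satisfy \eqref{eq:hpwrp.G}, then $(K-K\p\prime)\cdot X\p c+(W-W\p\prime)\ast(\mu\p{(X,H)}-\nu\p{(X,H)})\equiv0$ on $[0,T]$; the first term is continuous and the second purely discontinuous, so by uniqueness of the decomposition of a local martingale into its continuous and purely discontinuous parts each summand vanishes. The quadratic-variation identities $[K\cdot X\p c,K\cdot X\p c]=(K\p{tr}aK)\cdot A$ and $[W\ast(\mu\p{(X,H)}-\nu\p{(X,H)}),W\ast(\mu\p{(X,H)}-\nu\p{(X,H)})]=\sum_{s\leq\cdot}\wt W\p2_s$, applied to $K-K\p\prime$ and $W-W\p\prime$, then force $\|K-K\p\prime\|_{\Lrm_T\p p(X\p c,\Gbb)}=0$ and $\|W-W\p\prime\|_{\Gscr\p p_T(\mu\p{(X,H)},\Gbb)}=0$ directly from the definitions of the seminorms.

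The main obstacle is not computational but the transfer of Proposition \ref{prop:eq.wprp} to the progressively enlarged, finite-horizon setting: one has to be sure that the density, completeness and closedness results cited from \cite{J79} remain valid for $\Gbb_T$ and the semimartingale $(X,H)\p{tr}$. Because these are general properties of martingale and integrand spaces over an arbitrary filtered space satisfying the usual conditions, I expect this to be routine bookkeeping rather than a genuine difficulty.
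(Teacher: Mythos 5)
Your proposal is correct and follows exactly the route the paper intends: the corollary is stated there with no separate proof beyond ``combining Theorem \ref{thm:wea.prp} and Proposition \ref{prop:eq.wprp}'', which is precisely what you do, including the correct identification of $X\p c$ as the continuous local martingale part of $(X,H)\p{tr}$ and the observation that the density/completeness/closedness facts from Jacod transfer to $(\Gbb_T,[0,T])$. Your uniqueness argument via the continuous/purely-discontinuous decomposition and the quadratic-variation identities is a sound way to handle general $p$ where the $\Hscr\p2$-isometry is unavailable, and fills in a detail the paper leaves implicit.
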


\section{Applications in Exponential Utility Maximization}\label{sec:app.ex.ut.max}
In this section we consider a problem of exponential utility optimization of the {\pa expected} terminal wealth in presence of an additional exogenous risk source that cannot be inferred from the information available in the market, represented by the filtration $\Fbb$. The additional risk source can be a shock event, as the death of the investor or the default of part of the market. Its occurrence time {\pa modeled by} $\tau$.

The optimization problem described above will be solved following the \emph{dynamical approach}. This method is based on a martingale optimality principle obtained using suitable BSDEs. For the exponential utility function the related BSDE have a non-Lipschitz generator. However, the theory developed by Becherer in \cite{Be06}, in particular Theorem 3.5 therein, ensures the existence and the uniqueness of the solution of such BSDEs. The fundamental tool to apply results from \cite{Be06} to this context is Theorem \ref{thm:wea.prp}.

We now state some further assumptions which are mainly those of \cite{Be06}.
\begin{assumption}\label{ass:den.ass} Let $\tau:\Om\longrightarrow(0,+\infty]$ be a random time and {\pa let $T>0$} be  an arbitrary but fixed finite time horizon. We denote by $H$ the default process associated with $\tau$. A filtration $\Fbb$ satisfying the usual conditions is given and $\Gbb$ denotes the progressive enlargement of $\Fbb$ by $\tau$, while $\Gbb_T=(\Gscr_t)_{t\in[0,T]}$ is the restriction of $\Gbb$ to $[0,T]$. We also consider an $\Rbb\p d$-valued $\Fbb$-semimartingale $X$. The semimartingale characteristics of $X$ with respect to $\Fbb$ are $(B\p X,C\p X,\nu\p X)$. We furthermore require the following properties:

(1) $\tau$ satisfies Assumptions \ref{ass:av.im}.
\\[.2cm]
\indent(2) $\Fscr_0$ is trivial.
\\[.2cm]
\indent(3) The continuous local martingale part $X\p c$ of $X$ is a $\d$-dimensional standard Brownian motion $B=(B\p{1},\ldots,B\p{d})\p{tr}$. That is, $C\p X_t=t\mathrm{ Id}_d$, $\mathrm{Id}_d$ denoting the identity matrix in $\Rbb\p{d\times d}$.
\\[.2cm]
\indent(4) $X$ has the WRP with respect to $\Fbb$.
\\[.2cm] 
\indent(5) The $\Fbb$-predictable compensator $\nu\p X$ of the jump measure $\mu\p X$ is absolutely continuous, that is, there exists a L\'evy measure $\rho\p X$ on $\Rbb\p d$ (i.e., $\rho\p X$ is $\sig$-finite, $\int_{\Rbb\p d}(|x|\p2\wedge1)\rho\p X(\rmd x)<+\infty$ and $\rho\p X(\{0\})=0$) such that
\[
\nu\p X(\om,t,\rmd x)=\zt\p X(\om,t,x)\rho\p X(\rmd x)\rmd t,
\] 
where $(\om,t,x)\mapsto\zt(\om,t,x)\geq0$ is an $\Fbb$-predictable mapping. 
\\[.2cm]
\indent(6) The $\Gbb$-dual predictable projection $\Lm\p\Gbb$ of $H$ is absolutely continuous, that is, there exist a $\Gbb$-predictable process $\lm$ such that
\[
\Lm\p\Gbb_t=\int_0\p t1_{[0,\tau]}(s)\lm_s\rmd s.
\]
\\[.2cm]
\indent(7) The densities $\zt\p X(\om,t,x)$ and $\lm_t(\om)$ are uniformly bounded and the L\'evy measure $\rho\p X$ is finite.
\end{assumption} 
Assumptions \ref{ass:den.ass} (1) and (2) imply that the $\sig$-algebra $\Gscr_0$ is trivial, since $\Gscr_0=\Fscr_0\vee\sig(\{\tau=0\})$ (see \cite[Lemma 4.4 a)]{Jeu80}).

By Assumptions \ref{ass:den.ass} (1) and Lemma \ref{leb:XcharG}, $X$ is a semimartingale with characteristics $(B\p X,C\p X,\nu\p X)$ also with respect to $\Gbb$. Therefore, $(X,H)\p{tr}$ is a  $\Gbb$-semimartingale with values in $E=\Rbb\p d\times\{0,1\}$.

The semimartingale characteristics $(B\p {(X,H)},C\p {(X,H)},\nu\p {(X,H)})$ of $(X,H)\p{tr}$, from Assumptions \ref{ass:den.ass} and Proposition \ref{prop:ju.mea.Y} (ii), are given by $B\p {(X,H)}=(B\p X,\Lm\p\Gbb)\p{tr}$, 
\[C\p {(X,H)}_t=\begin{bmatrix}
    \mathrm{Id}_d       & 0 \\
    
    0       & 0
\end{bmatrix} t
\] 
and 
\[
\nu\p {(X,H)}(\om,\rmd t,\rmd x_1,\rmd x_2)=\zt\p {(X,H)}(\om,t,x_1,x_2)\rho\p {(X,H)}(\rmd x_1,\rmd x_2)\rmd t,
\]
where
\[
\zt\p {(X,H)}(\om,t,x_1,x_2):=\zt\p X(\om,t,x_1)1_{\{x_1\neq 0,x_2=0\}}+\lm_t(\om)1_{[0,\tau]}(\om,t)1_{\{x_1=0,x_2=1\}},
\]
and the measure $\rho\p {(X,H)}$ on $(E,\Bscr(E))$ is
\[
\rho\p {(X,H)}(\rmd x_1,\rmd x_2):=\rho\p X(\rmd x_1)\delta_0(\rmd x_2)+\delta_1(\rmd x_2)\delta_0(\rmd x_1).
\]
Clearly, $\zt\p {(X,H)}$ is uniformly bounded, as $\zt\p X$ and $\lm$ are, and $\rho\p {(X,H)}(E)<+\infty$ since $\rho\p X(\Rbb\p d)<+\infty$. 

From  (1) and (4) in  Assumptions \ref{ass:den.ass} and by Theorem \ref{thm:wea.prp}, the $\Rbb\p{d+1}$-dimensional semimartingale ${(X,H)\p{tr}}$ possesses the WRP with respect to $\Gbb_T$.

We remark that, because of \cite[Lemma 4.4 b)]{Jeu80} or \cite[Proposition 2.11 b)]{AJ17}, we can assume, without loss of generality, that the density $\lm$ of the process $\Lm\p\Gbb$ appearing in Assumptions \ref{ass:den.ass} (6) is, in fact, an $\Fbb$-predictable process.

\subsection{The Market Model}\label{subs:mar.mod}
We consider the same market model as in \cite[Section 4.1]{Be06} with respect to the filtration $\Fbb_T=(\Fscr_t)_{t\in[0,T]}$. The market price of risk $\varphi$ is an $\Fbb_T$-predictable and bounded $\Rbb\p d$-valued process. The volatility matrix $\sig$ is an  $\Rbb\p{d\times d}$-valued $\Fbb_T$-predictable process. We require that $\sig_t(\om)$ is invertible, for every $(\om,t)$ in $\Om\times[0,T]$ and, denoting by $\sig\p{i}$ the rows of $\sig$, we require that the euclidean norm of $\sig\p i_t$ is bounded. We assume that the price process $S=(S_t\p1,\ldots,S\p d_t)_{t\in[0,T]}$ evolves according to the following stochastic differential equation

\begin{equation}\label{eq:evol.S}
\rmd S_t=\mathrm{diag}(S_t\p i)_{i=1,\ldots,d}\sig_t(\varphi_t\rmd t+\rmd B_t),\quad S_0\in(0,+\infty)\p d,\quad t\in[0,T],
\end{equation}
where $\mathrm{diag}(S\p i)_{i=1,\ldots,d}$ takes values in $\Rbb\p{d\times d}$ and denotes the diagonal-matrix-valued process with $S$ on the diagonal. Denoting by $\Escr(Z)$ the stochastic exponential of the semimartingale $Z$, and setting
\[
\widehat B:=B+\int_0\p\cdot\varphi_s\rmd s,
\] 
from \eqref{eq:evol.S}, we deduce that $S\p i=S_0\p i\Escr(\sig\p i\cdot \widehat B)$. Notice that this market-model is free of arbitrage opportunities. Indeed,
\[
\rmd\Qbb:=\Escr\left(-\varphi\cdot B\right)_T\rmd\Pbb,
\]
defines a probability measure equivalent to $\Pbb$ on $\Gscr_T$. Since $\Gscr_T$ and $\Fscr_T$ contain the same null sets, namely those of $\Fscr$, $\Qbb$ is also equivalent to $\Pbb$ on $\Fscr_T$. By the boundedness of $\varphi$, Novikov's condition and Girsanov's theorem, $\widehat B$ is a $\Qbb$-Wiener process with respect to $\Gbb_T$ (and hence, with respect to $\Fbb_T$). Under $\Qbb$, again by Novikov's condition, $\sig\p i$ being bounded, $S\p i$ is a $\Qbb$-martingale with respect to $\Gbb_T$ (and hence with respect to $\Fbb_T$). Therefore, $\Qbb$ is an equivalent martingale measure for $S$ and the market model is free of arbitrage opportunities (with respect to both the filtrations $\Fbb_T$ and $\Gbb_T$). 

We remark that the $\Gbb$-predictable compensator $\nu\p X$ of $\mu\p X$ with respect to $\Qbb$ does not change (see \cite[Theorem 12.31]{HWY92}). 

\begin{definition}[Admissible Strategies]\label{def:adm.str}  An admissible strategy $\theta$ is a $\Gbb_T$-predictable $\Rbb\p d$-valued process satisfying the following conditions:

(i) $\int_0\p \cdot|\theta_s|\p2\rmd s\in L\p2(\Om,\Gscr_T,\Pbb)$.

(ii) $\exp(-\al\int_0\p\cdot\theta_s\rmd\widehat B_s)$ is a process of class $(D)$ with respect to the filtration $\Gbb_T$.

\noindent We denote by $\Theta$ the set of admissible strategies.
\end{definition}
We stress that the set $\Theta$ of the admissible strategies, consisting of $\Gbb_T$-predictable processes, can be regarded as the set of the strategies of an insider who has private information about the occurrence of $\tau$.

Let $\theta\in\Theta$ be an admissible strategy. The wealth process $X\p{x,\theta}=(X\p{x,\theta}_t)_{t\in[0,T]}$ is defined by
\begin{equation}\label{eq:wea.pr}
X\p{x,\theta}_t:=x+\int_0\p t\theta_s\rmd\widehat B_s=\int_0\p t\theta_s\big(\mathrm{diag}(S_t\p i)_{i=1,\ldots,d}\sig_t\big)\p{-1}\rmd S_s,\quad t\in[0,T].
\end{equation}
Clearly, for each $\theta\in\Theta$, the wealth process $X\p{x,\theta}$ is a $\Gbb_T$-martingale under the equivalent measure $\Qbb$ introduced above. Therefore, the set $\Theta$ of admissible strategies is free of arbitrage opportunities.

\subsection{Exponential Utility Maximization at $T>0$}\label{subs:ex.max.T}
Let $\xi$ belong to the class of bounded and $\Gscr_T$-measurable random variables {\pa(denoted by $\Bb(\Gscr_T)$)}. We now consider the optimization problem
\begin{equation}\label{eq:opt.pb}
U\p\xi(x)=\sup_{\theta\in\Theta}\Ebb\big[-\exp(-\al(X\p{x,\theta}_{T}-\xi))\big],\quad \xi\in\Bb(\Gscr_T),\quad \al>0.
\end{equation}
The random variable $\xi\in\Bb(\Gscr_T)$ represents a \emph{liability} or an \emph{asset} of the investor at maturity $T$.

We observe that, since the strategies are now $\Gbb_T$-predictable and $\xi\in\Bb(\Gscr_T)$, in general, this optimization problem cannot be solved in the filtration $\Fbb_T$, which represents the information available in the market, as done in \cite{Be06}. Furthermore, \eqref{eq:opt.pb} can be regarded as the optimization problem of an insider who has private information about the occurrence of $\tau$.
 
We stress that any $\Gscr_T$-measurable random variable $\xi$ can be regarded as a defaultable claim. As an example, we can consider claims of the form $\xi=\xi_11_{\{\tau>T\}}+\xi_21_{\{\tau\leq T\}}$, where $\xi_1$ is an $\Fscr_T$-measurable random variable, representing the pay-off of $\xi$ if the default does not occur in $[0,T]$ while $\xi_2$ is an $\Fscr_T$-measurable random variable representing the recovery pay-off of $\xi$ in case of default before $T$. For pricing method for defaultable claims, see, e.g., \cite{BJ08}, \cite{BJR04} and \cite{BJR08}.

To solve \eqref{eq:opt.pb}, we use the \emph{martingale optimality principle} on $[0,T]$, that is, we construct a family $\Rscr:=\{R\p\theta,\ \theta\in\Theta\}$ of $\Gbb_T$-adapted processes with the following properties:

(1) $R\p{\theta,x}_{T}=-\exp(-\al(X\p{\theta,x}_{T}-\xi))$, for every $\theta\in\Theta$.

(2) $R\p{\theta,x}_0\equiv r\p x$ is a constant not depending on $\theta$, for every $\theta\in\Theta$.

(3) $R\p{\theta,x}$ is a $\Gbb_T$-supermartingale for every $\theta\in\Theta$.

(4) There exists $\theta\p\ast\in\Theta$ such that $R\p{\theta\p\ast,x}$ is a $\Gbb_T$-martingale.

\noindent Notice that the strategy $\theta\p\ast$ in (4) above is optimal. Indeed, for any $\theta\in\Theta$ we get
\[
\Ebb\big[-\exp(-\al(X\p{\theta,x}_{T}-\xi))\big]=\Ebb\big[R\p{\theta,x}_{T}\big]\leq R\p{\theta,x}_{0}=r\p x=\Ebb\big[R\p{\theta\p\ast,x}_{T}\big]=\Ebb\big[-\exp(-\al(X\p{\theta\p\ast,x}_{T}-\xi))\big].
\]
We recall the notation $E:=\Rbb\p d\times\{0,1\}\subseteq\Rbb\p{d+1}$ and denote by $L\p0(\Bscr(E),\rho\p{(X,H)},\Rbb)$ the space of $\Bscr(E)$-measurable, $\Rbb$-valued functions on $E$ with the topology of the convergence in measure. We consider the generator 
\[
f:\Om\times[0,T]\times\Rbb\p d\times L\p0(\Bscr(E),\rho\p{(X,H)},\Rbb)\longrightarrow \Rbb
\] 
given by
\begin{equation}\label{def:gen}
\begin{split}
f(\om,t,&z,w_t):=-\bigg(z\p{tr}\varphi_t(\om)+\frac{|\varphi_t(\om)|\p2}{2\al}\bigg)\\&\quad+\frac{1}{\al}\,\int_E\big(\exp(\al w(t,x_1,x_2))-1-\al w(t,x_1,x_2)\big) \zt\p{(X,H)}(\om,t,x_1,x_2)\rho\p{(X,H)}(\rmd x_1,\rmd x_2).
\end{split}
\end{equation}
We then consider the BSDE
\begin{equation}\label{eq:bsde}
Y_t=\xi+\int_{t}\p{T} f(s,Z_s,W_s)\rmd s-\int_{t}\p{T} Z_s\rmd B_s-\int_{t}\p{T}\int_E  W(s,x_1,x_2)(\mu\p{(X,H)}-\nu\p{(X,H)})(\rmd s,\rmd x_1,\rmd x_2).
\end{equation}
Since $f$ satisfies the assumptions of \cite[Theorem 3.5]{Be06}, we deduce the existence and the uniqueness of a triplet $(Y,Z,W)\in\Sscr_T\p\infty(\Gbb)\times\Lrm_T\p2(B,\Gbb)\times\Gscr_T\p2(\mu\p{(X,H)},\Gbb)$ which satisfies \eqref{eq:bsde}, where $\Sscr_T\p\infty(\Gbb)$ is the space of essentially bounded $\Gbb_T$-semimartingales. Furthermore, $Y$ is bounded and $W$ is $\Pbb\otimes\nu\p{(X,H)}$-a.e.\ bounded. Without loss of generality, we assume that $W$ is bounded. Indeed, according to the proof of \cite[Theorem 3.5]{Be06}, there exists a bounded $\Gbb_T$-predictable mapping $W\p\prime$ such that $W=W\p\prime$ $\Pbb\otimes\nu\p{(X,H)}$-a.e.\ and, therefore, $W\ast(\mu\p{(X,H)}-\nu\p{(X,H)})$ is indistinguishable from $W\p\prime\ast(\mu\p{(X,H)}-\nu\p{(X,H)})$ on $[0,T]$. 

We define the family $\Rscr:=\{R\p\theta,\ \theta\in\Theta\}$ by
\begin{equation}\label{eq:def.fam.R}
R\p\theta_t:=-\exp(-\al(X\p\theta_t-Y_t)),\quad t\in[0,T].
\end{equation}
We now verify that $\Rscr$ fulfils the martingale optimality principle. We follow the ideas of \cite[Theorem 4.1]{Be06}. Before, we recall the following \emph{true-martingale} criterion for local martingales: Let $\Abb$ be an arbitrary right-continuous filtration. An $\Abb$-local martingale $X$ is a uniformly integrable martingale if and only if it is a process of class (D) with respect to $\Abb$ (see \cite[Proposition I.1.47 c)]{JS00}). For an example of a uniformly integrable local martingale which \emph{is not} a martingale see \cite[Chapter IV, Section 26 and Example VI.29]{DM82}.
\begin{theorem}\label{thm:R.mop}
The family $\Rscr$ satisfies the martingale optimality principle. Furthermore, the solution of the optimization problem \eqref{eq:opt.pb} is given by $\theta\p\ast=Z+\al\p{-1}\varphi\in\Theta$ and the explicit expression of the value function is $U\p\xi(x)=-\exp(-\al(x-Y_0))$.  
\end{theorem}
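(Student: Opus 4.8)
The plan is to run the martingale optimality principle exactly as in \cite[Theorem 4.1]{Be06}, the whole argument resting on an It\^o expansion of $R^\theta$ in which the generator $f$ from \eqref{def:gen} has been designed to annihilate the jump contribution. Properties (1) and (2) are immediate: the terminal condition $Y_T=\xi$ of \eqref{eq:bsde} gives $R^{\theta,x}_{T}=-\exp(-\al(X^{\theta,x}_{T}-\xi))$, and $X^{\theta,x}_0=x$ for every $\theta$ yields the constant $R^{\theta,x}_0=-\exp(-\al(x-Y_0))=:r^x$. The substance of the proof lies in the supermartingale property (3), the martingale property (4) for a distinguished $\theta^\ast$, and the resulting identification of the value function.

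First I would compute the dynamics of $V^\theta:=X^{\theta,x}-Y$. From \eqref{eq:wea.pr} one has $\rmd X^{\theta,x}_t=\theta_t^{tr}\varphi_t\,\rmd t+\theta_t^{tr}\,\rmd B_t$, while \eqref{eq:bsde} reads $\rmd Y_t=-f(t,Z_t,W_t)\,\rmd t+Z_t^{tr}\,\rmd B_t+\int_E W(t,x_1,x_2)(\mu^{(X,H)}-\nu^{(X,H)})(\rmd t,\rmd x_1,\rmd x_2)$. Hence the continuous martingale part of $V^\theta$ is $(\theta-Z)\cdot B$, with $\rmd\langle (V^\theta)^c\rangle_t=|\theta_t-Z_t|^2\,\rmd t$ because $X^c=B$ is a standard Brownian motion by Assumptions \ref{ass:den.ass} (3), and the jumps of $V^\theta$ are $\Delta V^\theta_t=-\wt W_t$. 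Since $\nu^{(X,H)}$ is absolutely continuous in time under Assumptions \ref{ass:den.ass}, the semimartingale $(X,H)^{tr}$ is quasi-left-continuous, so $\widehat W\equiv0$ and $\Delta V^\theta_t=-W(t,\Delta(X,H)_t)1_{\{\Delta(X,H)_t\neq0\}}$. Applying It\^o's formula to $R^\theta=-\rme^{-\al V^\theta}$ and compensating the jump sum by $\nu^{(X,H)}$, I would write $R^\theta=r^x+(\text{a }\Gbb_T\text{-local martingale})+\int_0^\cdot R^\theta_{s-}\,\rmd D^\theta_s$ for a predictable process $D^\theta$.

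The decisive step is the evaluation of $D^\theta$. Factoring out $\rme^{-\al V^\theta_{t-}}>0$, the drift rate of $R^\theta$ equals
\[
\rme^{-\al V^\theta_{t-}}\Big[\al\big(\theta_t^{tr}\varphi_t+f(t,Z_t,W_t)\big)-\tfrac{\al^2}{2}|\theta_t-Z_t|^2-\int_E\big(\rme^{\al W}-1-\al W\big)\zt^{(X,H)}\rho^{(X,H)}(\rmd x_1,\rmd x_2)\Big].
\]
Multiplying \eqref{def:gen} by $\al$ shows that $\al f$ cancels the integral against $\rho^{(X,H)}$ exactly, leaving $-\al Z_t^{tr}\varphi_t-\tfrac12|\varphi_t|^2$; completing the square in $\theta_t$ collapses the bracket to $-\tfrac{\al^2}{2}\big|\theta_t-Z_t-\al^{-1}\varphi_t\big|^2\leq0$. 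Therefore $\rmd D^\theta_t=\tfrac{\al^2}{2}|\theta_t-Z_t-\al^{-1}\varphi_t|^2\,\rmd t$ is nondecreasing and $R^\theta_{s-}\,\rmd D^\theta_s\leq0$ (as $R^\theta<0$), the drift vanishing precisely when $\theta_t=Z_t+\al^{-1}\varphi_t$. This singles out $\theta^\ast:=Z+\al^{-1}\varphi$.

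It remains to pass from local to genuine (super)martingales and to check $\theta^\ast\in\Theta$. Since $Y$ is bounded, $|R^\theta_t|=\rme^{\al Y_t}\rme^{-\al x}\exp(-\al\int_0^t\theta_s^{tr}\,\rmd\widehat B_s)$ is, up to a bounded factor, the process in Definition \ref{def:adm.str} (ii); hence $R^\theta$ is of class $(D)$ for every $\theta\in\Theta$, and localizing its martingale part and passing to the limit under uniform integrability upgrades the nonpositive-drift local supermartingale $R^\theta$ to a true $\Gbb_T$-supermartingale, proving (3). For $\theta^\ast$ the drift is null, so $R^{\theta^\ast}$ is a $\Gbb_T$-local martingale which, being of class $(D)$, is a true martingale by \cite[Proposition I.1.47 c)]{JS00}; this yields (4) and, through the chain of inequalities recorded before the statement with $R^{\theta^\ast}_0=r^x$, both the optimality of $\theta^\ast$ and $U^\xi(x)=-\exp(-\al(x-Y_0))$. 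The step I expect to be the main obstacle is verifying $\theta^\ast\in\Theta$, i.e.\ the square-integrability in Definition \ref{def:adm.str} (i) and the class $(D)$ bound (ii): both rest on fine integrability of the control $Z$ (a BMO-type estimate) supplied by \cite[Theorem 3.5]{Be06} rather than on the soft arguments above, and one must disentangle the apparent circularity between property (ii) for $\theta^\ast$ and the martingale property of $R^{\theta^\ast}$.
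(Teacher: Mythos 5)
Your overall strategy is the right one and, up to the point where it stops, it is essentially the paper's: the paper also reduces everything to the factorization of $R\p\theta$ into a nonincreasing part driven by $\tfrac{\al\p2}{2}|\theta-Z-\al\p{-1}\varphi|\p2$ and a positive local martingale part, the only cosmetic difference being that the paper works with the multiplicative decomposition $R\p\theta=\rme\p{-\al(x-Y_0)}A\p\theta\Escr(H\p\theta)$, with $H\p\theta=-\al(\theta-Z)\cdot B+(\rme\p{\al W}-1)\ast(\mu\p{(X,H)}-\nu\p{(X,H)})$, obtained via the exponential identity \eqref{eq:st.log.ex}, rather than with your additive It\^o expansion; your sign and completion-of-square computations are correct, and your localization argument for the supermartingale property (3), resting on the class-$(D)$ bound that follows from Definition \ref{def:adm.str}(ii) and the boundedness of $Y$, matches the paper's.

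However, there is a genuine gap exactly where you flag "the main obstacle": you never break the circularity between $\theta\p\ast\in\Theta$ and the martingale property of $R\p{\theta\p\ast}$, and your guess about how to break it points in the wrong direction. No BMO-type estimate on $Z$ itself is needed, and \cite[Theorem 3.5]{Be06} is not invoked for that purpose. The paper's key observation is that for $\theta\p\ast=Z+\al\p{-1}\varphi$ the control $Z$ \emph{cancels}: $\theta\p\ast-Z=\al\p{-1}\varphi$ is bounded, so by \eqref{eq:pb.H} the bracket $\aPP{H\p{\theta\p\ast}}{H\p{\theta\p\ast}}_T$ is bounded (using also the boundedness of $W$ and $\zt\p{(X,H)}$ and the finiteness of $\rho\p{(X,H)}$). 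Hence $H\p{\theta\p\ast}$ is a martingale in $\textnormal{BMO}_T(\Gbb)$ with jumps $\geq-1+\delta$, and \cite[Theorem 2]{IS79} yields that $\Escr(H\p{\theta\p\ast})$ is a uniformly integrable martingale, hence of class $(D)$ — \emph{without} assuming $\theta\p\ast\in\Theta$. Only then does one read off, from $R\p{\theta\p\ast}=-\rme\p{-\al(x-Y_0)}\Escr(H\p{\theta\p\ast})$ (note $A\p{\theta\p\ast}\equiv-1$) and the boundedness of $Y$, that $\exp(-\al\int_0\p\cdot\theta\p\ast_s\rmd\widehat B_s)$ is of class $(D)$, i.e.\ condition (ii) of Definition \ref{def:adm.str}; condition (i) is immediate from $Z\in\Lrm\p2_T(B,\Gbb)$ and $\varphi$ bounded. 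Without this step (or an equivalent Kazamaki/Izumisawa-type criterion applied to $H\p{\theta\p\ast}$), your argument establishes only that $R\p{\theta\p\ast}$ is a local martingale, which does not suffice to conclude optimality.
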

\begin{proof}
We preliminarily observe that, since $W$ and $\zt\p{(X,H)}$ are bounded and $\rho\p{(X,H)}$ is finite, both $W$ and $(\rme\p{\al W}-1)$ belong to $\Gscr\p2_T(\mu\p{(X,H)},\Gbb)$. Therefore, we deduce \[\al W\ast(\mu\p{(X,H)}-\nu\p{(X,H)})\in\Hscr_T\p2(\Gbb),\qquad(\rme\p{\al W}-1)\ast(\mu\p{(X,H)}-\nu\p{(X,H)})\in\Hscr\p2_T(\Gbb).\] Furthermore, we have $\Delta[(\rme\p{\al W}-1)\ast(\mu\p{(X,H)}-\nu\p{(X,H)})]\geq\rme\p{-\al c}-1>-1$, where $c>0$ is a constant such that $|W(\om,t,x_1,x_2)|\leq c$. 

Applying It\^o's formula to $\exp\big(\al W\ast(\mu\p{(X,H)}-\nu\p{(X,H)})-(\rme\p{\al W}-1-\al W)\ast\nu\p{(X,H)}\big)$ and then using {\pa the Dol\'eans--Dade} equation for the stochastic exponential, we verify the identity
\begin{equation}\label{eq:st.log.ex}
\exp\big(\al W\ast(\mu\p{(X,H)}-\nu\p{(X,H)})-(\rme\p{\al W}-1-\al W)\ast\nu\p{(X,H)}\big)=\Escr\big((\rme\p{\al W}-1)\ast(\mu\p{(X,H)}-\nu\p{(X,H)})\big).
\end{equation}
Using now \eqref{eq:st.log.ex}, BSDE \eqref{eq:bsde}, the explicit form \eqref{def:gen} of the generator $f$ and the properties of the stochastic exponential, we verify that, for every $\theta\in\Theta$, the identity $R\p\theta=\rme\p{-\al(x-Y_0)}A\p\theta\Escr(H\p\theta)$ holds, where
\begin{gather*}
A\p\theta:=-\exp\Bigg(\frac{\al\p2}2\int_0\p \cdot|\theta_s-Z_s-\al\p{-1}\varphi_s|\p2\rmd s\Bigg),\\H\p\theta:=-\al(\theta-Z)\cdot B+(\exp(\al W)-1)\ast(\mu\p{(X,H)}-\nu\p{(X,H)}).
\end{gather*}
Since  $(\theta-Z)\in\Lrm_T\p2(B,\Gbb)$ and $\Delta H\p\theta>-1+\delta$, for some $\delta>0$, we deduce $H\p\theta\in\Hscr\p2_T(\Gbb)$, $\Escr(H\p\theta)>0$, and finally
\begin{equation}\label{eq:pb.H}
\begin{split}
\aPP{H\p\theta}{H\p\theta}=\al\p2\int_0\p\cdot |\theta_s-&Z_s|\p 2\rmd s\\&+\int_0\p\cdot\int_E(\exp(\al W(s,x_1,x_2))-1)\p2\zt\p {(X,H)}(s,x_1,x_2)\rho\p {(X,H)}(\rmd x_1,\rmd x_2)\rmd s.
\end{split}
\end{equation}
Since $Y$ is bounded and $\theta\in\Theta$,  by definition (cf.\ \eqref{eq:def.fam.R}) $R\p\theta$ is a process of class (D) with respect to $\Gbb_T$. Hence, from $\Escr(H\p\theta)_t\leq-\rme\p{\al(x-Y_0)}R\p\theta_t$, we get that $\Escr(H\p\theta)$ is a $\Gbb_T$-local martingale of class (D), and hence, a $\Gbb_T$-uniformly integrable martingale, for every $\theta\in\Theta$. The process $A\p\theta$ is continuous, hence locally bounded, and decreasing. If now $(\sig_n)_n$ is a localizing sequence of $\Gbb_T$-stopping times for $A\p\theta$, i.e., $A\p\theta_{\cdot\wedge\sig_n}$ is bounded, and $0\leq s\leq t\leq T$, we get
\[
\begin{split}
\Ebb[R\p\theta_{t\wedge\sig_n}|\Gscr_s]&\leq \rme\p{-\al(x-Y_0)}A\p\theta_{\sig_n\wedge s}\Ebb[\Escr(H\p\theta)_{t\wedge\sig_n}|\Gscr_s]\\&=\rme\p{-\al(x-Y_0)}A\p\theta_{\sig_n\wedge s}\Escr(H\p\theta)_{s\wedge\sig_n}\longrightarrow R\p\theta_s,\quad n\rightarrow+\infty.
\end{split}
\]

But $(R\p\theta_{t\wedge\sig_n})_n$ is uniformly integrable for each $\theta\in\Theta$. Thus, $\Ebb[R\p\theta_{t\wedge\sig_n}|\Gscr_s]\longrightarrow \Ebb[R\p\theta_{t}|\Gscr_s]$ as $n\rightarrow+\infty$. So, $R\p\theta$ is a $\Gbb_T$-supermartingale, for every $\theta\in\Theta$. We consider the process $\theta\p\ast:=Z+\al\p{-1}\varphi$, which is $\Gbb_T$-predictable. We are going to verify that $\theta\p\ast\in\Theta$ and that $R\p{\theta\p\ast}$ is a martingale. For this end, we show that $\Escr(H\p{\theta\p\ast})$ is a uniformly integrable $\Gbb_T$-martingale. From \eqref{eq:pb.H}, $\varphi$ and $W$ being bounded, by the boundedness of $\zt\p{(X,H)}$ and the finiteness of $\rho\p{(X,H)}$, we deduce that $\aPP{H\p{\theta\p\ast}}{H\p{\theta\p\ast}}_T$ is bounded. Since $H\p{\theta\p\ast}$ is also a martingale with bounded jumps, \cite[Theorem 10.9]{HWY92} implies that $H\p{\theta\p\ast}$ is a $\Gbb_T$-martingale belonging to the class $\textnormal{BMO}_T(\Gbb)$, that is, the class of the $\Gbb$-martingale in the class BMO on the finite time interval $[0,T]$. Furthermore, $\Delta H\p{\theta\p\ast}>-1+\delta$, for some $\delta>0$. Hence, \cite[Theorem 2]{IS79} implies that $\Escr(H\p{\theta\p\ast})$ is a uniformly integrable $\Gbb_T$-martingale and therefore a process of class (D). Since $R\p{\theta\p\ast}=-\rme\p{-\al(x-Y_0)}\Escr(H\p{\theta\p\ast})$, we immediately get that $R\p{\theta\p\ast}$ is a $\Gbb_T$-martingale which is moreover uniformly integrable and, hence, of class (D). Therefore, by the boundedness of $Y$, we deduce that $\exp(-\al\int_0\p\cdot\theta\p\ast_s\rmd\widehat B_s)$ is a process of class (D). Clearly, $\theta\p\ast\in\Lrm_T\p2(B,\Gbb)$ holds, and, therefore, $\theta\p\ast\in\Theta$. By the martingale optimality principle, we deduce that $\theta\p\ast$ solves \eqref{eq:opt.pb} and $U\p\xi(x)=-\rme\p{-\al(x-Y_0)}$. The proof of the proposition is now complete.
\end{proof}

Let us now assume that $\xi\in\Bb(\Fscr_T)$, that is, $\xi$ is a bounded and $\Fscr_T$-measurable random variable. In this special case, we compare BSDE \eqref{eq:bsde} with BSDE (4.13) in \cite{Be06}. From Assumptions \ref{ass:den.ass}, we see that
\begin{align}
f(\om,t,z,w_t)=&-\bigg(z\p{tr}\varphi_t(\om)+\frac{|\varphi_t(\om)|\p2}{2\al}\bigg)\nonumber\\&+\nonumber\frac{1}{\al}\,\int_{\Rbb\p d}\big(\exp(\al w(t,x_1,0))-1-\al w(t,x_1,0)\big) \zt\p{ X}(\om,t,x_1)\rho\p{X}(\rmd x_1)\\&+\frac{1}{\al}\,\big(\exp(\al w(t,0,1))-1-\al w(t,0,1)\big)\lm_t(\om)1_{[0,\tau]}(\om,t)\label{eq:com.ter3}
\end{align}
and so $(\om,t,w_t)\mapsto f(\om,t,z,w_t)$ is, in general, a $\Gbb_T$-predictable function. 
Therefore, to obtain the generator of BSDE (4.13) in \cite{Be06}, which we denote by $g$, it is enough to take $w(t,0,1)\equiv 0$ in \eqref{eq:com.ter3}, that is, for $w\in L\p0(\Bscr(E),\rho\p{(X,H)},\Rbb)$, defining $u_t(x)=w_t(x,0)$, we obtain $g(\om,t,z,u_t)=f(\om,t,z,w_t)\big|_{w_t(0,1)=0}$. Hence, BSDE (4.13) in \cite{Be06} correspond to BSDE \eqref{eq:bsde} with $\xi\in\Bb(\Fscr_T)$ and generator $g$. The existence and the uniqueness of an $\Fbb_T$-solution follows from \cite[Theorem 3.5]{Be06} using the WRP of $X$ with respect to $\Fbb$.

\begin{remark}[Indifference pricing of defaultable claims]\label{subs:ind.pri}
We stress that an application of this section can be given if one considers the problem of indifference pricing of defaultable claims. Indeed, it is evident that the market model which we consider is not complete neither in the filtrations $\Fbb_T$ nor in $\Gbb_T$, since the price process $S$ is continuous but both $\Fbb_T$ and $\Gbb_T$ support martingales with jumps. Hence, the problem of pricing a $\Gscr_T$-measurable claim $\xi$ arises. A well-known way to price a contingent claim $\xi\in\Bb(\Gscr_T)$ is \emph{indifference pricing}: The \emph{indifference price} or \emph{utility indifference value} $\pi$ of the contingent claim $\xi$ is given by the implicit solution of the equation
\[
U\p0(x)=U\p\xi(x+\pi).
\]
Hence, the utility indifference value $\pi$ is the value that, if added to the initial capital $x$, makes the investor indifferent (in terms of expected utility) between only trading or trading and selling $\xi$ for $\pi$ in $t=0$ then trading and paying $\xi$ in $T$.

By Theorem \ref{thm:R.mop} (see also \cite[Theorem 4.4]{Be06}) the utility indifference value is $\pi=Y_0\p\xi-Y_0\p 0$,
where $Y\p0$ is the solution of BSDE \eqref{eq:bsde} {\pa in $\Fbb_T$ with $\xi=0$ and generator $g$ (defined above, just before this remark) while $Y\p\xi$ is the is the solution of BSDE \eqref{eq:bsde} in $\Gbb_T$ with terminal condition $\xi\in\Bb(\Gscr_T)$ and generator $f$}. We stress that it is important to ensure that the defaultable claim $\xi$ is a $\Gscr_T$-measurable and \emph{bounded} random variable. For the problem of indifference pricing of defaultable claims in the case of a progressively enlarged Brownian filtration $\Fbb$ we refer, e.g., to \cite{BJ08}, \cite{BJR04} or \cite{LQ11}. The problem of the indifference price has been also considered in \cite{Be06}.  
\end{remark}

\subsection{Exponential Utility Maximization at $T\wedge\tau$}\label{subs:ex.max.rth}
We now consider the optimization problem
\begin{equation}\label{eq:opt.pb.rh}
\hat U\p\xi(x)=\sup_{\hat\theta\in\hat\Theta}\Ebb\big[-\exp(-\al(X\p{\hat\theta}_{T\wedge\tau}-\xi))\big],\quad \xi\in\Bb(\Gscr_{T\wedge\tau}),\quad \al>0
\end{equation}
where $\Bb(\Gscr_{T\wedge\tau})$ denotes the class of bounded and $\Gscr_{T\wedge\tau}$-measurable random variables. The optimization problem \eqref{eq:opt.pb.rh} describes the case in which the investor can only follow his investment up to the occurrence time $\tau$ of the exogenous shock event. In other words, the investor has access to the market only up to time $\tau$. This means that the price process for the investor is not $S=(S_t)_{t\in[0,T]}$ itself but rather $S\p \tau=(S_{t}\p\tau)_{t\in[0,T]}$, where $S\p\tau_t:=S_{t\wedge\tau}$, $t\in[0,T]$. Notice that the optimization problem \eqref{eq:opt.pb.rh} automatically arises from \eqref{eq:opt.pb} if the price process $S$ in \eqref{eq:opt.pb.rh} is substituted by $S\p\tau$ and $\xi$ is assumed $\Gscr_{T\wedge\tau}$-measurable.

Jeanblanc et al.\ studied in \cite{JMPR15} the problem \eqref{eq:opt.pb.rh} when $\Fbb_T$ is the filtration generated by a $d$-dimensional Brownian motion $B$. In \cite{JMPR15} admissible strategies take values in a closed subset $C\subseteq\Rbb\p d$, which represents an additional constraint-set for admissible strategies.
The set of constraints $C$ leads to BSDEs with a non-Lipschitz generator $f$ which does not fit in the frame of \cite{Be06} because of a quadratic term in $z$. Furthermore, in \cite{JMPR15} the authors do not require that the density $\lm$ in Assumptions \ref{ass:den.ass} is bounded, as we do. On the other side, in \cite{JMPR15} the authors assume that the conditional law of $\tau$ given $\Fscr_t$ is equivalent to the Lebesgue measure and the immersion property (see \cite[\textbf{(H1)} (Density hypothesis)]{JMPR15}): These assumptions on $\tau$ seem to be stronger than Assumptions \ref{ass:den.ass} (1) and (6).

In summary, if we consider the special case $C=\Rbb\p d$ and $\lm$ bounded, then \eqref{eq:opt.pb.rh} can be seen as a generalization of the situation considered in \cite{JMPR15} in a Brownian setting to the case of an underlying filtration $\Fbb_T$ supporting martingales with jumps.
 
The optimization problem \eqref{eq:opt.pb.rh} has been considered  in \cite{JMPR15} as a separate problem. Thanks to the WRP with respect to $\Gbb_T$, which we obtained in Theorem \ref{thm:wea.prp} for all $\Gbb_T$-martingales, we can now deduce the existence and the uniqueness of the solution of BSDE \eqref{eq:bsde} on the random time interval $[0,T\wedge\tau]$ from \cite[Theorem 3.5]{Be06}. This allows to solve the optimization problem \eqref{eq:opt.pb.rh} exactly in the same way as \eqref{eq:opt.pb}. For this aim, we need the following proposition, which is a corollary to \cite[Theorem 3.5]{Be06}. Therefore, in the proof of this result, we use the notation from \cite[Theorem 3.5]{Be06}.

For a $\Gbb_T$-stopping time $\sig$, $Y\p\sig$  denotes the process $Y$ stopped at $\sig$, i.e., $Y\p\sig_t:=Y_{t\wedge\sig}$, $t\in[0,T]$. 

\begin{proposition}\label{prop:cor.to.Bec}
Let $\sig$ be a $\Gbb$-stopping time, let $\xi\in\Bb(\Gscr_{T\wedge\sig})$ and let $f$ be the generator defined in \eqref{def:gen}. Then the \emph{BSDE}
\begin{equation}\label{eq:stopped}
\begin{split}
Y_{t\wedge\sig}=\xi+\int_{t\wedge\sig}\p{T\wedge\sig} &f(s,Z_s,W_s)\rmd s-\int_{t\wedge\sig}\p{T\wedge\sig} Z_s\rmd B_s\\&-\int_{t\wedge\sig}\p{T\wedge\sig}\int_E  W(s,x_1,x_2)(\mu\p{(X,H)}-\nu\p{(X,H)})(\rmd s,\rmd x_1,\rmd x_2),\quad t\in[0,T],
\end{split}
\end{equation}
admits a unique bounded solution which is given by the solution \[(Y\p\sig,1_{[0,\sig]}Z,1_{[0,\sig]}W)\in\Sscr\p\infty_T(\Gbb)\times\Lrm_T\p2(B,\Gbb)\times\Gscr_T\p2(\mu\p{(X,H)},\Gbb),\quad W\ \textnormal{ bounded},\] of \emph{BSDE} \eqref{eq:bsde} with generator $f_\sig:=1_{[0,\sig]}f$ and terminal condition $\xi$.
\end{proposition}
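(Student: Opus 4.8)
The plan is to turn the stopped equation \eqref{eq:stopped} into an instance of the fixed-horizon BSDE \eqref{eq:bsde}, to which \cite[Theorem 3.5]{Be06} applies, by switching the generator off after $\sig$. First I would observe that, $\sig$ being a $\Gbb$-stopping time, the process $1_{[0,\sig]}$ is left-continuous and $\Gbb$-adapted, hence $\Gbb$-predictable, and takes values in $[0,1]$; therefore $f_\sig=1_{[0,\sig]}f$ inherits from $f$ (see \eqref{def:gen}) all the structural hypotheses of \cite[Theorem 3.5]{Be06}, multiplication by a predictable factor valued in $[0,1]$ preserving the relevant growth and convexity conditions. Since $\Gscr_{T\wedge\sig}\subseteq\Gscr_T$ we also have $\xi\in\Bb(\Gscr_T)$. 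Hence \eqref{eq:bsde} with generator $f_\sig$ and terminal datum $\xi$ admits a unique bounded solution $(Y,Z,W)\in\Sscr_T\p\infty(\Gbb)\times\Lrm_T\p2(B,\Gbb)\times\Gscr_T\p2(\mu\p{(X,H)},\Gbb)$, with $W$ bounded without loss of generality, exactly as for \eqref{eq:bsde} itself.

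The key step is to show that this solution is frozen after $\sig$. Since $f_\sig$ vanishes on $(\sig,T]$, on $[T\wedge\sig,T]$ the process $Y$ has null finite-variation part and is therefore a $\Gbb$-martingale there; as the driving stochastic integrals are genuine $\Hscr\p2(\Gbb)$-martingales, $Y_t=\Ebb[\xi\mid\Gscr_t]$ for $t\ge T\wedge\sig$. Because $\xi$ is $\Gscr_{T\wedge\sig}$-measurable, $\Ebb[\xi\mid\Gscr_t]=\xi$ on $\{T\wedge\sig\le t\}$, so $Y\equiv\xi$ on $[T\wedge\sig,T]$. Consequently the $\Hscr\p2(\Gbb)$-martingale $Z\cdot B+W\ast(\mu\p{(X,H)}-\nu\p{(X,H)})$ is constant on $[T\wedge\sig,T]$; by the orthogonality of its continuous and purely discontinuous parts and the isometries underlying Theorem \ref{thm:wea.prp}, I would conclude $1_{(\sig,T]}Z=0$ and $1_{(\sig,T]}W=0$ ($\Pbb\otimes\nu\p{(X,H)}$-a.e.). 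Thus $Y=Y\p\sig$, $Z=1_{[0,\sig]}Z$ and $W=1_{[0,\sig]}W$.

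It then remains to match integrals. Evaluating the $f_\sig$-BSDE at $t\wedge\sig$ and using $1_{[0,\sig]}\equiv1$ on $[0,\sig]$ and $\equiv0$ afterwards, together with $Z=1_{[0,\sig]}Z$ and $W=1_{[0,\sig]}W$, yields $\int_{t\wedge\sig}\p T f_\sig\,\rmd s=\int_{t\wedge\sig}\p{T\wedge\sig}f\,\rmd s$, $\int_{t\wedge\sig}\p T Z\,\rmd B=\int_{t\wedge\sig}\p{T\wedge\sig}Z\,\rmd B$ and the analogous identity for the compensated jump integral; these turn the $f_\sig$-BSDE precisely into \eqref{eq:stopped}. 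Hence $(Y\p\sig,1_{[0,\sig]}Z,1_{[0,\sig]}W)$ solves \eqref{eq:stopped}. For uniqueness I would run the same manipulations in reverse: given any bounded solution $(Y\p\prime,Z\p\prime,W\p\prime)$ of \eqref{eq:stopped}, the stopped triple $((Y\p\prime)\p\sig,1_{[0,\sig]}Z\p\prime,1_{[0,\sig]}W\p\prime)$ solves \eqref{eq:bsde} with generator $f_\sig$, so by the uniqueness obtained above it must coincide with $(Y\p\sig,1_{[0,\sig]}Z,1_{[0,\sig]}W)$, which is exactly the uniqueness claimed.

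The hard part is the freezing argument: it is here that one must simultaneously use the $\Gscr_{T\wedge\sig}$-measurability of $\xi$ (so that $Y$ is constant equal to $\xi$ past $\sig$) and the uniqueness of the weak representation from Theorem \ref{thm:wea.prp} (so that $Z$ and $W$ vanish past $\sig$); everything else is bookkeeping with the integration bounds and a careful separation of the sets $\{\sig\le T\}$ and $\{\sig>T\}$. The only two routine points worth verifying explicitly are the $\Gbb$-predictability of $1_{[0,\sig]}$ and the fact that Becherer's hypotheses on $f$ survive multiplication by this $[0,1]$-valued predictable factor.
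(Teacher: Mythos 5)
Your proof is correct, but it follows a genuinely different route from the paper's. The paper does not argue directly on the BSDE with generator $f_\sig$: it passes to the Lipschitz-truncated generators $\wt f$, $\wt f_\sig$ from the proof of \cite[Theorem 3.5]{Be06}, invokes \cite[Proposition 3.2]{Be06} to get the explicit representations $Y_t=\Ebb\big[\xi+\int_t^T\wt f_\sig(s,Z_s,W_s)\,\rmd s\,\big|\,\Gscr_t\big]$ and $\xi+\int_0^T\wt f_\sig\,\rmd s=\Ebb[\cdots]+Z\cdot B_T+W\ast(\mu^{(X,H)}-\nu^{(X,H)})_T$, and then applies Doob's stopping theorem to the latter to replace $Z,W$ by $1_{[0,\sig]}Z,1_{[0,\sig]}W$, reading off $Y_{T\wedge\sig}=\xi$ and $Y^\sig=Y$ from the former. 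You instead freeze the solution past $\sig$ by hand: conditioning the $f_\sig$-BSDE on $\Gscr_t$ over $\{\sig\le t\}$ gives $Y_t=\Ebb[\xi\,|\,\Gscr_t]=\xi$ there (using $\Gscr_{T\wedge\sig}$-measurability of $\xi$), and then the constancy of the martingale part on $[T\wedge\sig,T]$ plus orthogonality of the continuous and purely discontinuous parts and the $\Lrm^2$/$\Gscr^2$ isometries kills $Z$ and $W$ after $\sig$. Your version is more self-contained -- it never opens up the internals of Becherer's proof, and the integrability needed for the conditioning step is already guaranteed by $Z\in\Lrm^2_T(B,\Gbb)$, $W\in\Gscr^2_T(\mu^{(X,H)},\Gbb)$ and the pointwise vanishing of $f_\sig$ after $\sig$ -- whereas the paper's version gets the stopped representation in one stroke from optional stopping at the price of relying on the truncation device. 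Two small points to make explicit if you write this up: the conclusion $1_{(\sig,T]}W=0$ $\Pbb\otimes\nu^{(X,H)}$-a.e.\ (rather than merely $\|1_{(\sig,T]}W\|_{\Gscr^2_T}=0$) uses that $\nu^{(X,H)}$ is absolutely continuous in time under Assumptions \ref{ass:den.ass}, so that $\widehat W=0$ and the $\Gscr^2$-seminorm controls $W^2\ast\nu^{(X,H)}$; and in the uniqueness step you should note that the stopped triple built from an arbitrary bounded solution of \eqref{eq:stopped} does land in $\Sscr^\infty_T(\Gbb)\times\Lrm^2_T(B,\Gbb)\times\Gscr^2_T(\mu^{(X,H)},\Gbb)$, so that Becherer's uniqueness is actually applicable to it.
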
 
\begin{proof}
Since $f$ as in \eqref{def:gen} satisfies the assumptions of \cite[Theorem 3.5]{Be06}, also $f_\sig=1_{[0,\sig]}f$ does. Therefore, there exists the unique solution $(Y,Z,W)\in\Sscr_T\p\infty(\Gbb)\times\Lrm\p2_T(B,\Gbb)\times\Gscr\p2_T(\mu\p{(X,H)},\Gbb)$ of \eqref{eq:bsde} with generator $f_\sig$. Let $\wt f$ and $\wt f_\sig$ be the truncated generators defined as in the proof of \cite[Theorem 3.5]{Be06}. Then $\wt f_\sig$ satisfies the assumption of \cite[Proposition 3.2]{Be06} and therefore $(Y,Z,W)$ is also solution of BSDE \eqref{eq:bsde} with generator $\wt f_\sig$ (this is shown in the proof of \cite[Theorem 3.5]{Be06}). Hence, we have
\begin{equation}\label{eq:rep.Y}
Y_t=\Ebb\left[\xi+\int_t \p T \wt f_\sig(s,Z_s,W_s)\,\rmd s\Big|\Gscr_t\right]
\end{equation}
and
\begin{equation}\label{eq:rep.rv}
\xi+\int_0\p {T}\wt f_\sig(s,Z_s,W_s)\,\rmd s=\Ebb\left[\xi+\int_0\p {T}\wt f_\sig(s,Z_s,W_s)\,\rmd s\right]+Z\cdot B_T+W\ast(\mu\p{(X,H)}-\nu\p{(X,H)})_T.
\end{equation}
Since $\xi\in\Bb(\Gscr_{T\wedge\sig})$ and $\wt f_\sig=1_{[0,\sig]}\wt f$, from \eqref{eq:rep.rv} and Doob's stopping theorem, we get
\[
\xi+\int_0\p {T\wedge\sig}\wt f(s,Z_s,W_s)\,\rmd s=\Ebb\left[\xi+\int_0\p {T\wedge\sig}\wt f(s,Z_s,W_s)\,\rmd s\right]+1_{[0,\sig]}Z\cdot B_T+1_{[0,\sig]}W\ast(\mu\p{(X,H)}-\nu\p{(X,H)})_T.
\]
From \eqref{eq:rep.Y} we deduce $Y_{T\wedge\sig}=\xi$ and $Y\p\sig=Y$. So $(Y\p\sig,1_{[0,\sig]}Z,1_{[0,\sig]}W)$ solves BSDE \eqref{eq:bsde} with generator $\wt f_\sig$ and terminal condition $\xi\in\Bb(\Gscr_{T\wedge\sig})$. But then, according to the proof of \cite[Theorem 3.5]{Be06}, we also have that $(Y\p\sig,1_{[0,\sig]}Z,1_{[0,\sig]}W)$ solves BSDE \eqref{eq:bsde} with generator $f_\sig$ and terminal condition $\xi\in\Bb(\Gscr_{T\wedge\sig})$. Therefore, $(Y\p\sig,1_{[0,\sig]}Z,1_{[0,\sig]}W)$ also satisfies BSDE \eqref{eq:stopped} and the proof of the proposition is complete.
\end{proof}
We remark that, because of Proposition \ref{prop:cor.to.Bec}, we are able to consider the optimization problem \eqref{eq:opt.pb.rh} on the interval $[0,T\wedge\sig]$, for every $\Gbb$-stopping time $\sig$.
{\pa We denote by $(Y,1_{[0,\sig]}Z,1_{[0,\sig]})W$ solution of \eqref{eq:stopped} with $Y\in\Sscr_T\p\infty(\Gbb)$ and $W$ bounded.}
\\[1em]
\indent We now discuss the solution of the optimization problem \eqref{eq:opt.pb.rh}. First we define the set $\hat\Theta$ of the admissible strategies. Let $\Theta$ be the set of admissible strategies for the optimization problem \eqref{eq:opt.pb} introduced in Definition \ref{def:adm.str}. For any $\theta\in\Theta$, we define $\hat\theta:=1_{[0,T\wedge\tau]}\theta$ and clearly $\hat\theta\in\Theta$ holds. Let now $X\p{\theta,x}$ be the wealth process defined in \eqref{eq:wea.pr}. Then $X\p{\hat\theta,x}=X\p{\theta,x}$ on $[0,T\wedge\tau]$ and $(X\p{\theta,x})\p{T\wedge\tau}=X\p{\hat\theta,x}$ holds. So, the set of admissible strategies for \eqref{eq:opt.pb.rh} can be restricted to $\hat\Theta:=\{\theta\in\Theta:1_{(T\wedge\tau,T]}\theta=0\}\subseteq\Theta$. 
\begin{remark}[Admissible strategies for the random-time horizon problem]\label{rem:rh.str}
We now make the following important consideration: Let $A$ be a $\Gbb$-predictable process. Then, because of \cite[Lemma 4.4. b)]{Jeu80}, there exists an $\Fbb$-predictable process $a$ such that $A1_{[0,\tau]}=a1_{[0,\tau]}$, showing that $\Fbb$-predictable and $\Gbb$-predictable processes coincide on $[0,\tau]$. This means that the set $\hat\Theta$ of admissible strategies for the optimization problem \eqref{eq:opt.pb.rh}, consists of strategies which are actually $\Fbb_T$-predictable. In other words, if the  optimization problem \eqref{eq:opt.pb} on the time horizon $[0,T]$ can be regarded as the problem of an insider who can use $\Gbb_T$-predictable strategies, having some private information about $\tau$, the optimization problem \eqref{eq:opt.pb.rh} actually describes the problem of an agent for whom the available information is exclusively the one in the market (that is, he pursues $\Fbb_T$-predictable strategies) but, for some reasons, he has only access to the market up to the occurrence of the exogenous shock event, whose occurrence time is modelled by $\tau$. 
\end{remark}
Let now $\xi\in\Bb(\Gscr_{T\wedge\tau})$. From Proposition \ref{prop:cor.to.Bec} we know that BSDE \eqref{eq:stopped} on $[0,T\wedge\tau]$ with generator $f$ and terminal condition $\xi$, corresponds to BSDE \eqref{eq:bsde} on $[0,T]$ with generator $1_{[0,T\wedge\tau]}f$ and terminal condition $\xi$. Hence, BSDE \eqref{eq:stopped} has the unique solution 
\[
{\pa (Y,1_{[0,T\wedge\tau]}Z,1_{[0,T\wedge\tau]}W)\in\Sscr\p\infty_T(\Gbb)\times\Lrm_T\p2(B,\Gbb)\times\Gscr_T\p2(\mu\p{(X,H)},\Gbb),}
\] 
where $W$ is furthermore bounded. Notice that, being a $\Gbb_T$-predictable process, $Z$  coincides with an $\Fbb_T$-predictable process on $[0,\tau]$ (see \cite[Proposition 2.11]{AJ17}). A similar statement holds also for $W$: That is $W$ coincides with a $\Pscr(\Fbb)\otimes\Bscr(E)$-measurable mapping on $[0,\tau]$. To see this, it is enough to consider a bounded $\Gbb$-predictable mapping $G$ of the form $G(\om,t,x_1,x_2)=g_t(\om)f(x_1,x_2)$, where $g$ is a bounded $\Gbb$-predictable process and $f$ a bounded measurable function. For $\Gbb$-predictable mappings of this form the statement clearly hold, because of  \cite[Proposition 2.11]{AJ17}. Furthermore, this is a system generating $\Pscr(\Gbb)\otimes\Bscr(E)$. By the monotone class theorem, we get the result for every bounded $\Pscr(\Gbb)\otimes\Bscr(E)$-measurable mapping $G$ and, by approximation, for every nonnegative and then for every $\Pscr(\Gbb)\otimes\Bscr(E)$-measurable mapping $G$.

We now define the family $\hat\Rscr=\{\hat R\p{\hat\theta},\ \hat\theta\in\hat\Theta\}$ by 
\[
\hat R\p{\hat\theta}_{t\wedge\tau}=-\exp(-\al(X\p{\hat\theta,x}_{t\wedge\tau}-Y_{t\wedge\tau})),\quad t\in[0,T],\quad \hat\theta\in\hat\Theta.
\]
 Then, $\hat R\p{\hat \theta}_{T\wedge\tau}=-\exp(-\al(X\p{\hat\theta}_{T\wedge\tau}-\xi))$ holds. If we now define, for $t\in[0,T]$,
{\pa\[
\hat A\p{\hat\theta}_{t\wedge\tau}:=-\exp\Bigg(\frac{\al\p2}2\int_0\p{t\wedge\tau}|\hat\theta_s-Z_s-\al\p{-1}
\varphi_s|\p2\rmd s\Bigg)
\]
and
\[\begin{split}
\hat H_{t\wedge\tau}\p{\hat\theta}:=-\al\int_0\p{t\wedge\tau}\big((\hat\theta_s-&Z_s)\big)\rmd B_s\\&+\int_0\p{t\wedge\tau}\int_E(\exp(\al W(s,x_1,x_2))-1)\big)\ast(\mu\p{(X,H)}-\nu\p{(X,H)})(\rmd s,\rmd x_1,\rmd x_2),
\end{split}
\]
}
we can verify as in the proof of Theorem \ref{thm:R.mop} the identity $\hat R\p{\hat \theta}=\rme\p{-\al(x-Y_0)}\hat A\p{\hat\theta}\Escr(\hat H\p{\hat \theta})$, for $\hat\theta\in\hat\Theta$. Furthermore, $\hat R\p{\hat\theta}_{\cdot\wedge\tau}$ is a supermartingale on $[0,T]$ for every $\hat\theta\in\hat\Theta$. If now $\theta\p\ast$ is the optimal strategy for the optimization problem \eqref{eq:opt.pb} given in Theorem \ref{thm:R.mop}, it follows that $\hat\theta\p\ast:=1_{[0,T\wedge\tau]}\theta\p\ast$ belongs to $\hat\Theta$ and $\hat\R\p{\hat\theta\p\ast}_{\cdot\wedge\tau}$ is a martingale. Therefore, $\hat\Rscr$ satisfies the martingale optimality principle on $[0,T\wedge\tau]$ and $\theta\p\ast=1_{[0,T\wedge\tau]}(Z+\al\p{-1}\varphi)$ is the optimal solution of \eqref{eq:opt.pb.rh}. In particular, the explicit {\pa expression} of the value function is given by $\hat U\p\xi(x)=\Ebb[-\exp(X\p{\hat\theta\p\ast,x}_{T\wedge\tau}-\xi)]=-\exp(-\al(x-Y\p{T\wedge\tau}_0))$.

\paragraph{The continuous case.} We stress that, if $\rho\p X=0$, that is, $X$ is a $d$-dimensional Brownian motion, and $\Fbb=\Fbb\p X$, then \eqref{eq:stopped} with $\sig=\tau$ becomes
\begin{equation}\label{eq:rh.con}
Y_{t\wedge\tau}=\xi+\int_{t\wedge\tau}\p{T\wedge\tau} f(s,Z_s,U_s)\rmd s-\int_{t\wedge\tau}\p{T\wedge\tau} Z_s\rmd B_s-\int_{t\wedge\tau}\p{T\wedge\tau} U_s\rmd M_s,\quad t\in[0,T]
\end{equation}
where, for $t\in[0,T\wedge\tau]$, the generator is given by and
\[
f(\om,t,z,u)=-\bigg(z\p{tr}\varphi_t(\om)+\frac{|\varphi_t(\om)|\p2}{2\al}\bigg)+\frac{1}{\al}\,\big(\exp(\al u)-1-\al u\big)\lm_t(\om)1_{[0,\tau]}(\om,t),
\]
The previous equation has the unique solution {\pa $(Y,1_{[0,T\wedge\tau]}Z,1_{[0,T\wedge\tau]}U)$}, which is the solution of the same equation on $[0,T]$ with generator $1_{[0,T\wedge\tau]}f$ and initial condition $\xi\in\Bb(\Gscr_{T\wedge\tau})$. We notice that we can rewrite
\[
Y_{t\wedge\tau}=\xi-\int_{t\wedge\tau}\p{T\wedge\tau} g(s,Z_s,U_s)\rmd s-\int_{t\wedge\tau}\p{T\wedge\tau} Z_s\rmd B_s-\int_{t\wedge\tau}\p{T\wedge\tau} U_s\rmd H_s,\quad t\in[0,T]
\]
where $g(\om,t,z,u):=-f(\om,t,z,u)-\lm_t u$, which is the BSDE considered in \cite{JMPR15}. Hence, BSDE \eqref{eq:rh.con} corresponds to \cite[Eq.\ (3.5)]{JMPR15} with $C=\Rbb\p d$ and a bounded intensity $\lm$. This means that, in this special case, we can recover \cite[Theorem 4.17] {JMPR15}. Furthermore, because of the uniqueness of the bounded solution, the solution of \eqref{eq:rh.con} and the solution given in \cite[Proposition 4.4]{JMPR15} coincide.
\paragraph*{Acknowledgement.}
I acknowledge Martin Keller-Ressel (TU Dresden) and funding from the German Research Foundation (DFG) under grant ZUK 64.

I gratefully  thank {\pa Hans-J\"urgen Engelbert} (FSU Jena) for useful discussions and comments.

\end{document}